\numberwithin{equation}{section}
\newcommand{\Aut}{\textup{Aut}}
\newcommand{\fpr}{\textup{fpr}}
\subjclass[2010]{05C25, 20B25}
\keywords{Valency 3, Valency 4, Vertex-transitive, Arc-transitive, fixed-points}
\begin{document}
	
	\newtheorem{teo}{Theorem}[section]
	\newtheorem{de}[teo]{Definition}
	\newtheorem{cor}[teo]{Corollary}
	\newtheorem{lem}[teo]{Lemma}

\title[Edge fixity]{On the number of fixed edges of automorphisms of vertex-transitive graphs of small valency}

\author[Barbieri]{Marco Barbieri}
\address{Dipartimento di Matematica "Felice Casorati", University of Pavia, Via Ferrata 5, 27100 Pavia, Italy} 
\email{marco.barbieri07@universitadipavia.it}
\author[Grazian]{Valentina Grazian}
\address{Dipartimento di Matematica e Applicazioni, University of Milano-Bicocca, Via Cozzi 55, 20125 Milano, Italy} 
\email{valentina.grazian@unimib.it}
\author[Spiga]{Pablo Spiga}
\address{Dipartimento di Matematica e Applicazioni, University of Milano-Bicocca, Via Cozzi 55, 20125 Milano, Italy} 
\email{pablo.spiga@unimib.it}
	
\begin{abstract}
	We prove that, if $\Gamma$ is a finite connected $3$-valent vertex-transitive, or $4$-valent vertex- and edge-transitive graph, then either $\Gamma$ is part of a well-understood family of graphs, or every non-identity automorphism of $\Gamma$ fixes at most $1/3$ of the edges. This answers a question proposed by Primo\v{z} Poto\v{c}nik and the third author.
\end{abstract}	
	
\maketitle	
\section{Introduction}
Poto\v{c}nik and Spiga have proved in \cite{PS.fixedVertex} that, if $\Gamma$ is a finite connected $3$-valent vertex-transitive graph, or a $4$-valent vertex- and edge-transitive graph then, unless $\Gamma$ belongs to a well-known family of graphs, every non-identity automorphism of $\Gamma$ fixes at most $1/3$ of the vertices. In the same work, they have proposed  a  similar investigation with respect to the edges of the graph, see~\cite[Problem 1.7]{PS.fixedVertex}. In this paper we solve this problem.

\begin{teo}\label{thrm:main1}
	Let $\Gamma$ be a finite connected $4$-valent vertex- and edge-transitive  graph admitting a non-identity automorphism fixing more than $1/3$ of the edges. Then  one of the following holds:
	\begin{enumerate}
		\item\label{thrm:main1eq} $\Gamma$ is isomorphic to the complete graph on $5$ vertices;
		\item\label{thrm:main2eq} $\Gamma$ is isomorphic to a Praeger-Xu graph $C(r,s)$, for some $r$ and $s$ with $3s < 2r-3$.
	\end{enumerate}
\end{teo}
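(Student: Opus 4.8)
The plan is to compare the number of edges fixed by $g$ with the number of vertices it fixes, and then to invoke the vertex-fixity theorem of \cite{PS.fixedVertex}. Write $n=|V\Gamma|$, so that $\Gamma$ has $2n$ edges, let $F$ be the set of fixed vertices of $g$, and put $f=|F|$. Every fixed edge is of exactly one of two kinds: those with both endpoints in $F$, which are precisely the edges of the induced subgraph $\Gamma[F]$, and those whose two endpoints are interchanged by $g$. An edge of the second kind has the form $\{u,g(u)\}$ with $g(u)\ne u$, and two distinct such edges are vertex-disjoint and avoid $F$; hence there are at most $(n-f)/2$ of them. Thus the number of fixed edges is at most $e(\Gamma[F])+(n-f)/2$, where $e(\Gamma[F])$ denotes the number of edges of $\Gamma[F]$.

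I would then bound $e(\Gamma[F])$. As $g$ permutes the four neighbours of each $v\in F$, the degree of $v$ in $\Gamma[F]$ is the number of fixed points of this permutation, and so lies in $\{0,1,2,4\}$, the value $3$ being impossible for a permutation of four points. Degree $4$ occurs exactly at the \emph{interior} fixed vertices, those whose neighbours are all fixed, and it is here that I expect the real difficulty to lie. The key lemma I would prove is that, apart from the Praeger--Xu graphs and possibly finitely many small graphs, $g$ has no interior fixed vertex; equivalently, the pointwise stabiliser in $\Aut(\Gamma)$ of a closed neighbourhood is trivial. This is precisely the local phenomenon separating Praeger--Xu graphs from the remaining $4$-valent vertex- and edge-transitive graphs: for the latter the local action forces the pointwise stabiliser of a ball of radius one to be trivial, so that fixing a vertex together with all its neighbours would, by connectedness, propagate to $g=1$. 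Establishing this --- presumably through the local-action analysis underlying \cite{PS.fixedVertex} together with the characterisation of Praeger--Xu graphs --- is the crux of the argument.

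Granting the lemma, suppose $\Gamma$ is neither a Praeger--Xu graph nor one of the finitely many exceptions. Then every fixed vertex has degree at most $2$ in $\Gamma[F]$, so $e(\Gamma[F])\le f$, and the count above gives at most $f+(n-f)/2=(n+f)/2$ fixed edges. For this to exceed $2n/3$ we need $f>n/3$, that is, $g$ fixes more than one third of the vertices. The vertex-fixity theorem of \cite{PS.fixedVertex} then places $\Gamma$ in its explicit list of exceptional graphs; removing the Praeger--Xu graphs from this list leaves a short, completely explicit collection, and inspecting the fixed-edge ratio of every automorphism of every graph in it (together with the finitely many exceptions from the lemma) should show that only $K_5$ admits a non-identity automorphism fixing more than $1/3$ of its edges. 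Indeed a transposition of two adjacent vertices of $K_5$ fixes $4$ of the $10$ edges, giving conclusion~\ref{thrm:main1eq}.

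It remains to treat the Praeger--Xu graphs $C(r,s)$ directly, and here the analysis is a finite computation rather than a structural obstacle. The automorphisms fixing the most edges are the ``local'' involutions supported on a bounded window of the underlying cycle $C_r$; counting the edges such an involution fixes and comparing with the total number of edges, the requirement that more than $1/3$ of the edges be fixed clears to the affine inequality $3s<2r-3$, which is conclusion~\ref{thrm:main2eq}. Together the two branches yield the theorem.
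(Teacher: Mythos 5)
Your counting reduction is sound as far as it goes: every fixed edge either lies in $\Gamma[F]$ or has its ends swapped by $g$, the swapped edges are pairwise disjoint and avoid $F$, a fixed vertex has $0$, $1$, $2$ or $4$ fixed neighbours, and, granting your key lemma, $\fpr(E\Gamma,g)>1/3$ does force $\fpr(V\Gamma,g)>1/3$, at which point the vertex-fixity theorem of \cite{PS.fixedVertex} applies. The genuine gap is that the key lemma is false as you state it. You claim that, outside the Praeger--Xu graphs and finitely many small exceptions, the pointwise stabiliser of a closed neighbourhood --- that is, $G_v^{[1]}$ --- is trivial. This fails for infinitely many graphs. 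In the $2$-arc-transitive world, the classification of finite faithful $2$-transitive amalgams of index $(4,2)$ contains amalgams with $G_v^{[1]}\neq 1$, for instance $\left(S_3\times S_4,\, S_3\times S_3,\, (S_3\times S_3)\rtimes C_2\right)$, where $G_v$ has order $144$ and even $G_v^{[1]}\cap G_w^{[1]}\cong C_2$ for suitable vertices at distance $2$; each such amalgam is realised by infinitely many finite $4$-valent graphs, and none of these is Praeger--Xu, since by Lemma~\ref{AutPrXu} the graphs $C(r,s)$ with $r\neq 4$ are not even $2$-arc-transitive. In the non-$2$-arc-transitive world the lemma fails as well: there exist $4$-valent half-arc-transitive graphs with arbitrarily large ($2$-group) vertex stabilisers, and since the local group there has order at most $4$, all of them have $G_v^{[1]}\neq 1$; again these are not Praeger--Xu, because Praeger--Xu graphs are arc-transitive. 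So interior fixed vertices genuinely occur in non-exceptional graphs, and $V_4(\Gamma,g)=\emptyset$ cannot be forced. If instead you weaken the lemma so that it applies only to automorphisms $g$ with $\fpr(E\Gamma,g)>1/3$, then it is no longer false, but it becomes essentially the theorem itself, so it cannot be ``granted'' and then used to prove the theorem.

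It is instructive to see how the paper copes with exactly this obstruction. In the $2$-arc-transitive case it never eliminates degree-$4$ vertices of the fixed subgraph; it only bounds their number, proving (for girth at least $5$, via the amalgam classification) inequalities of the shape $3|V_4(\Gamma,g)|\le 3|V_1(\Gamma,g)|+|V_2(\Gamma,g)|$ when $G_v^{[1]}\cap G_w^{[1]}$ is a $3$-group for vertices at distance at most $2$, treating the exceptional amalgam displayed above by a separate count, and handling the girth-$\le 4$ graphs ($K_5$, $K_{4,4}$, $K_{5,5}-5K_2$, $Q_4$, $BCH$) by direct inspection. In the non-$2$-arc-transitive case the argument is not local at all: there $G_v$ is a $2$-group, and either $G$ has no non-identity normal $2$-subgroup --- in which case Theorem~\ref{O2G=1}, applied to the faithful transitive action of $G$ on $E\Gamma$, already yields $\fpr(E\Gamma,g)\le 1/3$ --- or one takes a minimal normal $2$-subgroup $N$, uses Lemma~\ref{charPrXu} to force $N$ to be semiregular when $\Gamma$ is not Praeger--Xu, and inducts on the normal quotient $\Gamma/N$. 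Your final step, the computation inside $C(r,s)$ producing the inequality $3s<2r-3$, does match the paper's Lemma~\ref{gInK}; but the central claim on which your whole reduction rests is unavailable, and repairing it requires precisely the amalgam analysis and the group-theoretic/inductive machinery that constitute the paper's proof.
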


\begin{teo}\label{thrm:main2}
	Let $\Gamma$ be a finite connected $3$-valent vertex-transitive  graph admitting a non-identity automorphism fixing more than $1/3$ of the edges. Then $\Gamma$ is isomorphic to a Split Praeger-Xu graph $SC(r,s)$,  for some $r$ and $s$ with $3s < 2r-3$.
\end{teo}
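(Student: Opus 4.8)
The plan is to reduce the edge statement to the vertex-fixity theorem of Poto\v{c}nik and Spiga \cite{PS.fixedVertex} for cubic vertex-transitive graphs. Write $n=|V\Gamma|$, so $\Gamma$ has $m=3n/2$ edges, and let $g\neq 1$ fix more than $m/3=n/2$ edges. First I would classify the fixed edges: an edge $\{u,v\}$ fixed by $g$ either has both endpoints fixed (call this set $A$) or has its two endpoints interchanged (set $B$). Two elementary local observations drive the argument. Since $g$ permutes the three edges at a fixed vertex, each fixed vertex is incident to $0$, $1$, or $3$ edges of $A$; and when $g$ is an involution only $1$ or $3$ occur, since an involution of a $3$-set fixes $1$ or $3$ points. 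Moreover, if $\{u,v\}\in B$ then $g$ interchanges $u$ and $v$, whence the edges of $B$ form a matching and $g^2$ fixes both $u$ and $v$.

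The crucial point is that \emph{every} fixed edge of $g$, whether in $A$ or in $B$, has both endpoints in $\textup{Fix}(g^2)$: endpoints of $A$-edges are fixed by $g$ and hence by $g^2$, while the endpoints of a $B$-edge lie in a $2$-cycle of $g$ and so are fixed by $g^2$. Thus $E_g$ is contained in the edge set of the subgraph induced on $\textup{Fix}(g^2)$, and since $\Gamma$ is cubic this gives $|E_g|\le \tfrac32\,|\textup{Fix}(g^2)|$. Assume now $g^2\neq 1$. Then $|E_g|>n/2$ forces $|\textup{Fix}(g^2)|>n/3$, so the non-identity automorphism $g^2$ fixes more than a third of the vertices; by the cubic vertex-fixity theorem of \cite{PS.fixedVertex}, $\Gamma$ then lies in the exceptional family, that is, it is a Split Praeger-Xu graph $SC(r,s)$. (The finitely many small graphs on the vertex theorem's exception list are dispatched by direct inspection, checking that none carries an automorphism fixing more than a third of its edges unless it already occurs as some $SC(r,s)$.) The identical argument applied to $g$ itself settles every involution with $|\textup{Fix}(g)|>n/3$.

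This leaves the genuine obstacle: $g$ is an involution with $|\textup{Fix}(g)|\le n/3$, where the power trick is unavailable; here I would show that such a $g$ cannot fix more than $n/2$ edges, so that this case produces no exceptional graph. Writing $f_1,f_3$ for the numbers of fixed vertices incident to exactly $1$ and exactly $3$ edges of $A$, and $t$ for the number of $2$-cycles of $g$, a double count of the edges meeting the $2t$ non-fixed vertices yields $|E_g|=|A|+|B|=\tfrac{3n}{2}-2f_1-|D|$, where $|D|$ counts the edges joining two non-fixed vertices lying in distinct $2$-cycles. So the desired bound $|E_g|\le n/2$ is equivalent to the inequality $2f_1+|D|\ge n$. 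Pure counting is too weak to establish this, and I expect it to be the main technical effort: one must use connectivity together with the full vertex-transitive group $\Aut(\Gamma)$ to force enough edges of type $D$, equivalently to show that the local configuration of degree-$1$ and degree-$3$ fixed vertices attached to $2$-cycles cannot be globally realised in a cubic vertex-transitive graph with $|\textup{Fix}(g)|\le n/3$ while keeping $|E_g|>n/2$. A natural line of attack is to manufacture, from a hypothetical such $g$, a second automorphism fixing more than $n/3$ vertices, thereby reducing back to the previous paragraph. Finally I would record that the inequality $3s<2r-3$ is precisely the condition under which $SC(r,s)$ admits an automorphism fixing more than a third of its edges, matching the statement.
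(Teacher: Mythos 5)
Your reduction for the case $g^2\neq 1$ is correct and is a genuinely nice shortcut the paper does not use: every edge fixed by $g$ (pointwise or with its ends swapped) has both ends in $\mathrm{Fix}(V\Gamma,g^2)$, so more than $|V\Gamma|/2$ fixed edges force $\fpr(V\Gamma,g^2)>1/3$, and the vertex-fixity theorem of \cite{PS.fixedVertex} applies to $g^2$. But the proposal has a genuine gap exactly where you flag it: involutions $g$ with $\fpr(V\Gamma,g)\le 1/3$. You assert this case produces no exceptional graphs, yet you give no proof, only the hope of ``manufacturing'' a second automorphism with many fixed vertices. That hope cannot be realized in general. In the prism $\mathrm{Pr}_n$ (large $n$), the fixed-point-free involution fixes exactly $n$ of the $3n$ edges --- exactly $1/3$, i.e.\ right at the threshold you must control --- while \emph{no} automorphism of $\mathrm{Pr}_n$ fixes more than a bounded number of vertices; so there is no auxiliary automorphism to which the vertex theorem could ever be applied, and any argument here must engage the global structure of $\Gamma$ rather than fixed-vertex counts. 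Moreover, the sharpness examples for the whole theorem (graphs with a setwise-invariant complete matching) live precisely in this case, so the inequality you need, $2f_1+|D|\ge n$, is tight and cannot follow from refined local counting alone. This is not a loose end; it is the heart of the theorem.

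For comparison, the paper spends essentially all of its effort on the case you leave open, splitting according to the local group $G_v^{\Gamma(v)}\le S_3$. If $G_v^{\Gamma(v)}=1$, then $\Gamma$ is a Cayley graph and Lemma~\ref{Cayley} gives $\fpr(E\Gamma,g)\le 1/3$, which disposes of matching-type involutions in the regular case (prisms, M\"{o}bius ladders). If $G_v^{\Gamma(v)}\cong C_2$, there is a canonical complete matching $\mathcal{M}$; contracting it (the merge construction of \cite{PSV.cubicCensus}) yields a $4$-valent arc- but not $2$-arc-transitive graph $\tilde{\Gamma}$ on which $G$ acts faithfully, edge-fixity transfers, and Theorem~\ref{thrm:main1} together with the split/merge correspondence forces $\Gamma\cong SC(r,s)$ --- so the cubic theorem genuinely depends on the $4$-valent one, an input your proposal never touches. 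If $G_v^{\Gamma(v)}$ is transitive, Tutte's theorem gives $s$-arc-regularity, whence $\Gamma[g]$ is a forest and an Euler-formula count gives $\fpr(E\Gamma,g)\le 1/3$. Two smaller points: the vertex theorem's exceptional family carries its own parameter constraint, and pinning down $3s<2r-3$ requires an explicit edge-fixity computation in $SC(r,s)$ (the analogue of Lemma~\ref{gInK}); neither comes for free by ``recording'' it.
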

We refer to Section~\ref{secPrXu} for the definition of the ubiquitous Praeger-Xu graphs and for their splittings. The bound in Theorem~\ref{thrm:main2} is sharp. For instance, each $3$-valent graph admitting a non-identity automorphism fixing setwise a complete matching  has the aforementioned property.  For valency $4$,  we conjecture that the bound $1/3$ in Theorem~\ref{thrm:main1} can be strengthen to $1/4$, by eventually including some more small exceptional graphs in part~\eqref{thrm:main1eq}.

Theorem~\ref{thrm:main1} and~\ref{thrm:main2}  rely on the following group-theoretic fact:
\begin{teo}[\cite{PS.transPerm}, Theorem~1.1] \label{O2G=1}Let $G$ be a finite transitive permutation group on $\Omega$ containing no non-identity normal subgroup of order a power of $2$. Suppose there exists $\omega \in \Omega$ such that the stabilizer $G_\omega$ of $\omega$ in $G$ is a $2$-group. Then, every non-identity element of $G$ fixes at most $1/3$ of the points.
\end{teo}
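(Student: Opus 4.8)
The plan is to recast the whole problem in terms of fixed-point ratios. Writing $H=G_\omega$, a routine double-coset count shows that, for every $g\in G$, the number of points fixed by $g$ equals $|C_G(g)|\,|g^G\cap H|/|H|$, so the proportion of fixed points is
\[
\fpr(g)=\frac{|g^G\cap H|}{|g^G|},
\]
the fraction of the conjugacy class $g^G$ that lies inside the stabilizer $H$. Here the hypothesis $O_2(G)=1$ pays off immediately: the kernel of the action is the core $\bigcap_{x\in G}H^x\le H$, which is a normal $2$-subgroup and hence trivial, so the action is faithful. The goal is now to prove $|g^G\cap H|/|g^G|\le 1/3$ for all $g\neq 1$.

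The next step is to reduce to involutions. Since $H$ is a $2$-group, every element of $g^G\cap H$ has $2$-power order, so the displayed formula gives $\fpr(g)=0$ unless $g$ itself is a $2$-element; thus we may assume $g$ has order $2^k$. Letting $t=g^{2^{k-1}}$ be the involution in $\langle g\rangle$, every point fixed by $g$ is fixed by all of $\langle g\rangle$, in particular by $t$, whence $\fpr(g)\le\fpr(t)$. It therefore suffices to show $\fpr(t)\le 1/3$ for every involution $t\in G$, i.e.\ that at most one third of each class of involutions meets $H$.

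For the bound itself I would argue by induction on $|G|$, taking a minimal counterexample and examining a minimal normal subgroup $N$; since $O_2(G)=1$, $N$ is either elementary abelian of odd order or a product of nonabelian simple groups. The $N$-orbits form a block system, and combining the quotient action on $\Omega/N$ (whose stabilizer $HN/N$ is again a $2$-group) with the analysis of $N$ acting inside a single block should drive the problem down to the (quasi)primitive case. The delicate point, and the step I expect to be the main obstacle, is that the two hypotheses ``$H$ a $2$-group'' and ``$O_2=1$'' need not both survive passage to an arbitrary section, so the inductive reduction must be organised carefully around the minimal counterexample, tracking exactly how $t^G\cap H$ distributes across the blocks.

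In the resulting base cases one invokes the O'Nan--Scott description. In the affine case $N\cong\mathbb{F}_p^d$ with $p$ odd (as $N$ is not a $2$-group) and $G=N\rtimes H$ with $H$ a $2$-group of linear maps; an affine involution fixes either no point or an affine subspace of codimension at least $1$, so its fixed-point ratio is at most $p^{-1}\le 1/3$, with equality exactly when $p=3$ and the fixed set has codimension one. This recovers the sharp example $\mathrm{AGL}(1,3)=S_3$ acting on $3$ points. For the almost simple and product-type cases, the stabilizer being a $2$-group forces the degree, and hence every nontrivial class size $|t^G|$, to be very large compared with the few involutions available inside $H$; the strong fixed-point-ratio bounds for primitive groups (Liebeck--Saxl, Guralnick--Kantor, and Burness's refinements) then yield ratios well below $1/3$, leaving only a short list of small groups to be checked directly.
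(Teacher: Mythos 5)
The first thing to note is that the paper contains no proof of Theorem~\ref{O2G=1}: it is quoted verbatim from \cite{PS.transPerm} and used as a black box, so your attempt amounts to reproving that external paper rather than reconstructing an argument that appears here. Your preliminary reductions are correct and standard: the double-coset count giving $\fpr(\Omega,g)=|g^G\cap G_\omega|/|g^G|$, faithfulness of the action (the core of $G_\omega$ is a normal $2$-subgroup of $G$, hence trivial by hypothesis), the observation that only $2$-elements can fix points, and the reduction to involutions via $\mathrm{Fix}(\Omega,g)\subseteq\mathrm{Fix}(\Omega,t)$ for the involution $t\in\langle g\rangle$.

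Everything after that, however --- which is the actual content of the theorem --- is a plan rather than a proof, and the plan has a concrete hole that you yourself flag. The minimal-counterexample induction requires the two hypotheses to be inherited by the reduced groups, and they are not: if $N$ is a minimal normal subgroup and $K$ is the kernel of the action of $G$ on $\Omega/N$, then $K$ may properly contain $N$ and $O_2(G/K)$ may be non-trivial even though $O_2(G)=1$; the same failure occurs for the permutation group induced by a block stabilizer on its block. So quotienting in the style of Lemma~\ref{fprQuotient} cannot simply be combined with induction in the way you describe, and ``organising the reduction carefully around the minimal counterexample'' is precisely the mathematical problem, which is left unsolved. The base cases are likewise not secured: the affine computation is fine (and recovers the sharp example $S_3$ on $3$ points), but for the almost simple and product-type cases you appeal to asymptotic fixed-point-ratio bounds (Liebeck--Saxl, Guralnick--Kantor, Burness), all of which carry lists of exceptional actions; since the bound $1/3$ is actually attained, those exceptions and the alleged ``short list of small groups'' must be explicitly enumerated and checked, and that unperformed case analysis is where the real work of such a proof would lie.
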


The main results of this paper and the results in~\cite{PS.fixedVertex} show that, besides small exceptions or well-understood families of graphs, non-identity automorphisms of 3-valent or 4-valent vertex-transitive graphs cannot fix many vertices or edges. Where ``too many'' in this context has to be considered as a linear function on the number of vertices (and, even then, with a small caveat for $4$-valent graphs, because of the assumption of edge-transitivity).   In our opinion, the difficulty in having a unifying theory of vertex-transitive graphs of small valency admitting non-identity automorphisms fixing too many vertices or edges is due to our lack of understanding possible generalizations of Praeger-Xu graphs. That is, vertex-transitive graphs of bounded valency playing the role of Praeger-Xu graphs. It seems to us that this is a recurrent problem in the theory of groups acting on finite graphs of bounded valency. A general investigation in this direction, but with much weaker bounds and only for arc-transitive graphs, is in~\cite{MR4343059}.

Investigations on the number of fixed points of graph automorphisms do have interesting applications. For instance, very recently Poto\v{c}nik, Toledo and Verret~\cite{PTV} pivoting on the results in~\cite{PS.fixedVertex} have proved remarkable results on the cycle structure of general automorphisms of $3$-valent vertex-transitive and $4$-valent arc-transitive graphs.

\subsection{Structure of the paper}\label{structure}
In Section~\ref{sec2new}, we introduce some basic terminology and, in particular, we introduce the Praeger-Xu graphs and their splittings. Then, we start in Section~\ref{sec2} with some preliminary results.  In Section~\ref{tetrageneral}, we prove Theorem~\ref{thrm:main1} and, in Section~\ref{secBuddy}, we prove Theorem~\ref{thrm:main2}.

\section{The players}\label{sec2new}

\subsection{Basic group-theoretic notions}\label{notation:group}
 Given  a permutation $g$ on a set $\Omega$, we write $\mathrm{Fix}(\Omega,g)$ for the set of \textbf{fixed points of $g$}, i.e.
\begin{equation*}
	\mathrm{Fix}(\Omega,g) = \left\lbrace \omega\in \Omega \;|\;\; \omega ^g = \omega \right\rbrace,
\end{equation*}
and we write $\mathrm{fpr} (\Omega,g)$ for the \textbf{fixed-point-ratio of $g$}, i.e.
\begin{equation*}
	\mathrm{fpr}(\Omega, g) = \frac{|\mathrm{Fix}(\Omega,g)|}{|\Omega|}.
\end{equation*}

A permutation group $G$ on $\Omega$ is said to be \textbf{semiregular} if the identity is the only element fixing some point. When $G$ is semiregular and transitive on $\Omega$, the group $G$ is \textbf{regular} on $\Omega$.

Given a permutation group $G$ of $\Omega$ and a partition $\Sigma$ of $\Omega$, we say that $\Sigma$ is $G$-\textbf{invariant} if $\sigma^g\in \pi$, for every $\sigma\in \Sigma$. Given a normal subgroup $N$ of $G$, the orbits of $N$ on $\Omega$ form a $G$-invariant partition, which we denote by $\Omega/ N$.

We present here a useful lemma involving the notion just defined.
\begin{lem}[\cite{PS.fixedVertex}, Lemma~1.16]\label{fprQuotient}
	 Let $G$ be a group acting transitively on $\Omega$ and let $\Sigma$ be a $G$-invariant partition of $\Omega$. For $g\in G$, let $g^\Sigma$ be the permutation of $\Sigma$ induced by $g$. Then $
	\fpr(\Omega, g) \leq \fpr(\Sigma, g^\Sigma).$
	In particular, if $N\unlhd G$, then 
	$\fpr(\Omega, g) \leq \fpr(\Omega/N, Ng)$.
\end{lem}

\subsection{Basic graph-theoretic notions}\label{notation:gr}
In this paper, a \textbf{digraph} is binary relation
\begin{equation*}
	\Gamma=(V\Gamma,A\Gamma),
\end{equation*}
where $A\Gamma\subseteq V\Gamma\times V\Gamma$. We refer to the elements of $V\Gamma$ as  \textbf{vertices} and to the elements of $A\Gamma$ as arcs. A \textbf{graph} is a finite simple undirected graph, i.e. a pair
\begin{equation*}
	\Gamma=(V\Gamma,E\Gamma),
\end{equation*}
where $V\Gamma$ is a finite set of \textbf{vertices}, and $E\Gamma$ is a set of unordered pairs of $V\Gamma$, called \textbf{edges}. In particular, a graph can be thought of as a digraph where the binary relation is symmetric and contains no loops. Given a non-negative integer $s$, an \textbf{$s$-arc} of $\Gamma$ is an ordered set of $s+1$ adjacent vertices with any three consecutive elements pairwise distinct. When $s=0$, an $s$-arc is simply a vertex of $\Gamma$; when $s=1$,  an $s$-arc is simply an \textbf{arc}, that is, an oriented edge. 

The \textbf{girth} of $\Gamma$, denoted by $g(\Gamma)$, is the minimum length of a cycle in $\Gamma$.

We denote by $\Gamma(v)$ the \textbf{neighbourhood} of the vertex $v$. The size of $|\Gamma(v)|$ is  the \textbf{valency} of $v$. We are mainly dealing with \textbf{regular} graphs, that is, with graphs where $|\Gamma(v)|$ is constant as $v$ runs through the elements of $V\Gamma$. In these cases, we refer to the valency of the graph.

 Let $\Gamma$ be a graph, let $G$ be a subgroup of the automorphism group $\Aut(\Gamma)$ of $\Gamma$, let $v \in V\Gamma$ and let $w\in \Gamma(v)$. We denote by $G_v$ the \textbf{stabilizer} of the vertex $v$, by $G_{\{v,w\}}$ the setwise stabilizer of the edge $\{v,w\}$, by $G_{vw}$ the pointwise stabilizer of the edge $\{v,w\}$ (that is, the stabilizer of the arc $(v,w)$ underlying the edge $\{v,w\}$). The group $G_v$ acts on $\Gamma(v)$ and we denote by $G_v^{[1]}$ the kernel of the action of $G_v$ on $\Gamma(v)$. Now, the permutation group induced by $G_v$ on $\Gamma(v)$ is denoted by $G_v^{\Gamma(v)}$ and we have
$$G_v^{\Gamma(v)}\cong\frac{G_v}{G_v^{[1]}}.$$
 
When $G$ acts transitively on the set of $s$-arcs of $\Gamma$, we say that $G$ is $s$-\textbf{arc-transitive}. When $s=0$, we say that $G$ is \textbf{vertex-transitive} and, when $s=1$, we say that $G$ is \textbf{arc-transitive}.  Moreover, when $G$ acts regularly on the set of $s$-arcs of $\Gamma$ we emphasis this fact by saying that $G$ is $s$-\textbf{arc-regular}.

When $G$ acts transitively on $E\Gamma$, we say that $G$ is \textbf{edge-transitive}. Finally, when $G$ is edge- and vertex-transitive, but not arc-transitive, we say that $G$ is \textbf{half-arc-transitive}. This name comes from the fact that $G$ has two orbits on ordered pairs of adjacent vertices of $\Gamma$ (a.k.a. arcs), each orbit containing precisely one of the two arcs underlying  each edge.

We say that $\Gamma$ is vertex-, edge- or arc-transitive when $\mathrm{Aut}(\Gamma)$ is vertex-, edge- or arc-transitive.

Let $G$ be a finite group and let $S$ be a subset of $G$. The \textbf{Cayley digraph} on $G$ with connection set $S$ is the digraph $\Gamma:=\mathrm{Cay}(G,S)$ having vertex set $G$ and where $(g,h)\in A\Gamma$ if and only if $gh^{-1}\in G$. Now, $\mathrm{Cay}(G,S)$ is a symmetric binary relation  if and only if $S$ is inverse closed, that is, $S=S^{-1}$ where $S^{-1}:=\{s^{-1}\mid s \in S\}$. Observe that the right regular representation of $G$ acts as a group of automorphisms on $\mathrm{Cay}(G,S)$.

\subsection{Praeger-Xu graphs}\label{secPrXu}
In this and in the next section, we introduce the infinite families of graphs appearing in our main theorems. We introduce the $4$-valent \textbf{Praeger-Xu graphs} $C(r,s)$ through their directed counterpart defined in \cite{Prager.arcTransDi}. Further details on Praeger-Xu graphs can be found in~\cite{GardinerPraeger.charTetraSymGraphs, JajcayPW.cycleStrPrXuGraphs, PragerXu.twicePrimeValency}. We also advertise~\cite{MR4321645}, where the authors have begun a thorough investigation of Praeger-Xu graphs, motivated by the recurrent appearance of these objects in the theory of groups acting on graphs.

\par Let $r\geq 3 $ be an integer. Then $\vec{C}(r,1)$ is the lexicographic product of a directed cycle of length $r$ with the edgeless graph on $2$ vertices. In other words, $V\vec{C}(r,1)=\mathbb{Z}_r\times \mathbb{Z}_2$, and the two arcs starting in $(x,i)$ end in $(x+1,0)$ and in $(x+1,1)$. For any $2\leq s\leq r-1$, $V\vec{C}(r,s)$ is defined as the set of all $(s-1)$-arcs of $\vec{C}(r,1)$, and $(v_0,v_1,\dots,v_{s-1})\in V\vec{C}(r,s)$ is the beginning point of the two arcs ending in $(v_1, v_2,\dots,v_{s-1},u)$ and in $(v_1, v_2,\dots,v_{s-1},u')$, where $u$ and $u'$ are the two vertices of $\vec{C}(r,1)$ that prolong the $(s-1)$-arc $(v_1, v_2,\dots,v_{s-1})$. The Praeger-Xu graph $C(r,s)$ is then defined as the non-oriented underlying graph of $\vec{C}(r,s)$. It can be verified that $C(r,s)$ is a connected $4$-valent graph with $r2^s$ vertices and $r2^{s+1}$ edges.
\par We  describe the automorphisms of  $C(r,s)$. Some automorphism of $C(r,s)$ arises from the action of $\mathrm{Aut}(C(r,1))$ on the set of $s$-arcs of $C(r,1)$. 
Let $i\in \mathbb{Z}_r$ and let $\tau_i$ be the transposition on $V\vec{C}(r,1)$ swapping the vertices $(i,0)$ and $(i,1)$ and fixing the remaining vertices. Since $\tau_i$ is an automorphism of $\vec{C}(r,1)$, it is immediate to extended the action of $\tau_i$ to $C(r,1)$ and to $C(r,s)$. We define the group
\begin{equation*}
	K = \langle \tau_i \;|\;\; i\in \mathbb{Z}_r \rangle \cong C_2^r, 
\end{equation*}
and throughout this paper the symbol $K$ will always refer to this group for some $C(r,s)$. Focusing on the cyclic nature of the Praeger-Xu graphs, it is also natural to define on $V\vec{C}(r,1)$ the permutations $\rho$ and $\sigma$ as follows
\begin{equation*}
	(x,i)^\rho = (x+1,i), \hspace{4.5mm}\textup{and}\hspace{4.5mm} (x,i)^\sigma = (-x,i).
\end{equation*}
While $\rho$ is an automorphism of $\vec{C}(r,1)$, $\sigma$ is an automorphism of $C(r,1)$ but not of $V\vec{C}(r,1)$. Moreover, observe that the group $\langle \rho,\sigma \rangle$ normalizes $K$. Define
\begin{equation*}
	H = K\langle \rho,\sigma \rangle, \hspace{4.5mm}\textup{and}\hspace{4.5mm} H^+ = K\langle \rho \rangle,
\end{equation*}
and, as for $K$, the symbols $H$ and $H^+$ will always refer to these groups. Clearly $H\cong C_2 \wr D_r$ is a group of automorphisms of $C(r,s)$ and $H^+\cong C_2 \wr C_r$ is a group of automorphisms of $\vec{C}(r,s)$. Moreover, $H$ acts vertex- and edge-transitively on $C(r,s)$ (and so does $H^+$ on $\vec{C}(r,s)$), but not $2$-arc-transitively. 

\begin{lem}\label{AutPrXu}
	Using the notation above,
$	\Aut( \vec{C}(r,s)) = H^+$
	and, if $r\neq 4$, $\Aut(C(r,s))=H$. Moreover,
	\begin{equation*}
	|\Aut(C(4,1)):H|=9,\;\; |\Aut(C(4,2)):H|=3 \;\; \textit{and} \;\; |\Aut(C(4,3)):H|=2.
	\end{equation*}
\end{lem}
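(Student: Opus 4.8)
The lower bounds $H^+\le\Aut(\vec C(r,s))$ and $H\le\Aut(C(r,s))$ are already recorded above, so the content of the lemma is the reverse inclusions together with the three index computations at $r=4$. Throughout I work in coordinates in which a vertex of $\vec C(r,s)$ is a pair $(x;a_0,\dots,a_{s-1})$ with $x\in\mathbb Z_r$ and $a_j\in\mathbb Z_2$, the arcs being $(x;a_0,\dots,a_{s-1})\to(x+1;a_1,\dots,a_{s-1},b)$ for $b\in\mathbb Z_2$; in these coordinates the $K$-orbits are the \emph{columns} $\{x\}\times\mathbb Z_2^s$ and $\rho$ is the shift $x\mapsto x+1$. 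For the directed graph I would prove $\Aut(\vec C(r,s))=H^+$ by induction on $s$, the engine being an intrinsic contraction lowering $s$ by one. Call $u,v$ \emph{out-twins} if they have equal out-neighbourhoods; from the arc rule this happens exactly when they lie in one column and share the suffix $(a_1,\dots,a_{s-1})$, so the out-twin classes have size $2$ and are permuted by $\Aut(\vec C(r,s))$. Contracting each class to a point yields, after reading off the induced arcs, precisely $\vec C(r,s-1)$, giving a natural homomorphism $\Aut(\vec C(r,s))\to\Aut(\vec C(r,s-1))$. This map is injective: if $g$ lies in its kernel then $gu\in\{u,u'\}$, where $u'$ differs from $u$ only in $a_0$; but the in-neighbourhood $N^-(u)$ is a single out-twin class of the previous column (its two members share the suffix $(a_0,\dots,a_{s-2})$), so $g$ fixes $N^-(u)$ setwise, whence $N^-(gu)=N^-(u)\ne N^-(u')$ and therefore $gu=u$. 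Since $|H^+|=2^r r$ is independent of $s$, the inductive hypothesis forces $|\Aut(\vec C(r,s))|\le 2^r r=|H^+|$, hence equality; the base case $s=1$ is the lexicographic product $\vec C_r[\overline{K_2}]$, where the two vertices of a column are full twins, the quotient by columns is the directed cycle with $\Aut(\vec C_r)=\langle\rho\rangle\cong C_r$, and the kernel is all of $C_2^r=K$, giving $H^+$. Note this argument has no exceptional value of $r$, because $\Aut(\vec C_r)=C_r$ for every $r\ge 3$.

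For the undirected graph with $r\ne 4$, the orientation is no longer given, so instead I would show directly that the column partition is invariant under $G:=\Aut(C(r,s))$. The columns can be recovered from the undirected graph via its short cycles: two vertices in one column sharing a prefix, or sharing a suffix, have two common neighbours and span a $4$-cycle, and the auxiliary graph joining vertices with (at least) two common neighbours reconstructs the columns and the cyclic adjacency of consecutive columns whenever $r\ge 5$ (the case $r=3$ being checked directly). Granting this, $G$ permutes the $r$ columns preserving the quotient cycle $C_r$, so it induces a subgroup of $\Aut(C_r)=D_r$, realised in full by $\langle\rho,\sigma\rangle$; and the kernel fixing every column is computed just as in the directed case, the bipartite adjacency between consecutive columns forcing it to be the group $K$ of column-wise bit flips. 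Hence $|G|\le 2^r\cdot 2r=|H|$ and $G=H$. Equivalently, this amounts to showing that the only shift-orientations of $C(r,s)$ are the standard one and its reverse, so that $[\,G:\Aut(\vec C(r,s))\,]=2$.

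When $r=4$ the constraint $1\le s\le r-1$ leaves only $s\in\{1,2,3\}$, so the three stated indices exhaust all cases. These I would treat by hand: the reconstruction above fails precisely because column $x{+}2$ coincides with column $x{-}2$ when $r=4$, producing extra pairs of vertices with two common neighbours and hence extra automorphisms. Identifying $C(4,1)$, $C(4,2)$, $C(4,3)$ with their standard small models and computing $|\Aut|$ directly yields $|\Aut(C(4,1)):H|=9$, $|\Aut(C(4,2)):H|=3$ and $|\Aut(C(4,3)):H|=2$, the last being consistent with $C(4,3)$ admitting only a single extra symmetry beyond those already in $H$.

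The main obstacle is the undirected case for $r\ne 4$: extracting the cyclic/column structure canonically from the bare undirected graph, and in particular isolating the single numerical coincidence at $r=4$ responsible for the extra symmetry. The directed case is rigid and falls to the clean out-twin induction, and the $r=4$ statements are finite verifications once the correct small models are recognised.
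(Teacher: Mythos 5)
Your directed-case argument is correct and self-contained: the out-twin contraction $\vec{C}(r,s)\to\vec{C}(r,s-1)$, the injectivity of the induced map on automorphism groups via the in-neighbourhoods $N^-(u)\neq N^-(u')$ (valid since $s\geq 2$ in the inductive step), and the base case $s=1$ together give $\Aut(\vec{C}(r,s))=H^+$ for all $r\geq 3$. Note that this is already more than the paper does, since the paper's proof of this lemma is a citation of Praeger's and Praeger--Xu's classification theorems. However, the undirected case for $r\neq 4$ --- which you correctly identify as the main obstacle, and which is where the real content lies --- rests on a reconstruction claim that is false for $s\geq 3$.

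Concretely: you claim that the auxiliary graph joining pairs of vertices with at least two common neighbours has the columns as its connected components. Writing vertices as $(x;a_0,\dots,a_{s-1})$, two vertices in the same column have at least two common neighbours exactly when they differ in only the first bit (two common out-neighbours) or only the last bit (two common in-neighbours); any other pair in a column has \emph{no} common neighbour, and for $r\geq 5$ the only cross-column pairs with a common neighbour (columns $x$ and $x+2$) have exactly one. Hence the auxiliary components are the orbits of the group generated by ``flip $a_0$'' and ``flip $a_{s-1}$'', which are sets of size $4$: for $s\geq 3$ these are proper subsets of the columns (size $2^s\geq 8$). For instance, in $C(5,3)$ the vertices $(0;0,0,0)$ and $(0;0,1,0)$ lie in the same column but have no common neighbour and cannot be joined inside the auxiliary graph, whose components here are the ten sets $\{(x;\ast,a_1,\ast)\}$. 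So the column partition is not recovered, and the subsequent steps (quotient acting as a subgroup of $D_r$, kernel equal to $K$) have no foundation; the kernel claim is in any case only asserted, and it is not ``as in the directed case'', where what you analysed was the kernel of the twin-contraction, not of the column action. A plausible repair is to observe that your size-$4$ blocks do form an $\Aut(C(r,s))$-invariant partition whose quotient graph is $C(r,s-2)$, and to run an induction with step $2$ (base cases $s=1,2$, where your components genuinely are the columns), together with an honest propagation argument showing that the kernel of the action on blocks is as small as required; but as written the undirected half of the proof fails for every $r\geq 5$ and $3\leq s\leq r-1$.
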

\begin{proof}
	It follows from \cite[Theorem 2.8]{Prager.arcTransDi} and \cite[Theorem 2.13]{PragerXu.twicePrimeValency} when $p=2$.
\end{proof}

The Praeger-Xu graphs also admit the following algebraic characterization.
\begin{lem}\label{charPrXu}
	Let $\Gamma$ be a finite connected $4$-valent graph and let $G$ be a vertex- and edge-transitive group of automorphisms of $\Gamma$. If $G$ has an abelian normal subgroup which is not semiregular on $V\Gamma$, then $\Gamma$ is isomorphic to a Praeger-Xu graph $C(r,s)$, for some integers $r$ and $s$.
\end{lem}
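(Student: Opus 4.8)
Let me sketch how I'd prove Lemma~\ref{charPrXu}.

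The plan is to use the abelian normal subgroup to build a block system on $V\Gamma$, to identify its quotient as a cycle, and then to recognize the resulting cyclic structure as that of a Praeger--Xu graph. Write $A$ for the abelian normal subgroup that is not semiregular, and let $\mathcal{B}=V\Gamma/A$ be the $G$-invariant partition into $A$-orbits. Since $A$ is abelian and transitive on each of its orbits, it acts \emph{regularly} on every block; hence all blocks share a common size $m=|A:A_v|$, and for each vertex $v$ the stabilizer $A_v$ is precisely the pointwise stabilizer in $A$ of the whole block $v^A$ (the same subgroup for every vertex of that block). Non-semiregularity gives $A_v\neq 1$.

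First I would show that $A_v$ acts nontrivially on the neighbourhood $\Gamma(v)$. If instead $A_v$ fixed $\Gamma(v)$ pointwise, then for each neighbour $w$, choosing $g\in G$ with $v^g=w$ and using $A\unlhd G$ gives $A_w=A_v^{\,g}$; since $A_v$ fixes $w$ we get $A_v\le A_w$, and comparing orders (the two stabilizers are conjugate, hence equicardinal) forces $A_v=A_w$. Connectedness then propagates this equality to $A_u=A_v$ for every vertex $u$, so that $A_v$ fixes all of $V\Gamma$, i.e. $A_v=1$, a contradiction. Thus $L:=A_v^{\Gamma(v)}$ is a nontrivial abelian subgroup of $G_v^{\Gamma(v)}$, and it is normal there because $A\unlhd G$.

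Next I would pin down the quotient $\Gamma/A$. Since $G$ is vertex- and edge-transitive on a $4$-valent graph, $G_v^{\Gamma(v)}$ is either transitive on the four neighbours (the arc-transitive case) or preserves, without interchanging them, a partition of $\Gamma(v)$ into two pairs (the half-arc-transitive case). As $L\le A$ respects $\mathcal{B}$, every $L$-orbit on $\Gamma(v)$ lies inside a single block, so the number of blocks adjacent to $v^A$ is at most the number of $L$-orbits. Running through the possibilities for the nontrivial abelian normal subgroup $L$ of $G_v^{\Gamma(v)}\le\mathrm{Sym}(\Gamma(v))$, one finds that $\Gamma(v)$ meets at most two blocks, and in the principal case exactly two, each contributing a pair of neighbours. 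Consequently $\Gamma/A$ has valency at most $2$; being connected and $G$-vertex-transitive, it is a cycle $C_r$ with $r\ge 3$, the only degenerate alternative being $K_2$ (that is, a bipartite $\Gamma$).

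Finally I would reconstruct $C(r,s)$ from this data. With $\Gamma/A\cong C_r$, I would coordinatize the blocks and show that each has size a power of $2$, say $2^s$, with $A$ regular on each block and $A_v$ acting on the two adjacent blocks by independent transpositions; the local group $G_v^{\Gamma(v)}$ preserving the pairing is then one of the local groups occurring for Praeger--Xu graphs, and the adjacency between consecutive blocks is governed by the $(s-1)$-arc rule defining $\vec{C}(r,1)$, identifying $\Gamma$ with $C(r,s)$. The degenerate $K_2$-quotient and the small values of $r$ are treated by hand (for instance $C(4,1)\cong K_{4,4}$), re-choosing the canonical normal subgroup $K\cong C_2^r$ when necessary to exhibit the genuine cyclic quotient, and leaning on \cite{Prager.arcTransDi, PragerXu.twicePrimeValency, GardinerPraeger.charTetraSymGraphs} for the identification. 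I expect the main obstacle to be exactly the local step of the third paragraph in the half-arc-transitive case: a priori $L$ might swap one pair of neighbours while fixing the other, which would inflate the valency of $\Gamma/A$ beyond $2$; ruling this out requires using that $A$ is normal in the whole of $G$, and not merely in $G_v$, in order to propagate the local permutation action consistently around the graph and force the clean two-pairs configuration.
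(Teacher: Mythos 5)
The paper does not prove this lemma from first principles: its entire proof is a citation, because the statement \emph{is} Theorem~2.9 of \cite{Prager.arcTransDi} combined with Theorem~1 of \cite{PragerXu.twicePrimeValency}, specialised to $p=2$. You are therefore attempting to reprove a substantial known theorem, and your sketch has a genuine gap precisely at the step you flag yourself. In the half-arc-transitive case with $G_v^{\Gamma(v)}\cong C_2\times C_2$, the nontrivial normal subgroup $L=A_v^{\Gamma(v)}$ may be generated by a single transposition, giving three $L$-orbits on $\Gamma(v)$ and, a priori, a quotient $\Gamma/A$ of valency $3$; at that point your ``the quotient is a cycle'' step, on which everything later rests, collapses. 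Naming the obstacle is not the same as removing it, and the phrase ``propagate the local permutation action consistently around the graph'' is exactly the missing argument, not a proof of it. For the record, the obstacle can be removed: since $A$ is abelian, $A_u=A_v$ for every $u$ in the block $v^A$; the $G$-invariant orientation then shows that all $2m$ out-edges of the block $v^A$ (where $m$ is the common block size) end in the single block $B$ containing the two out-neighbours of $v$, so they exhaust all $2m$ in-edges of $B$; hence both in-neighbours of the vertex $a\in B$ lie in $v^A$, whereas conjugating the configuration at $v$ by an (orientation-preserving) element of $G$ mapping $v$ to $a$ forces the two in-neighbours of $a$ to lie in two \emph{distinct} blocks --- a contradiction. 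Nothing of this kind appears in your proposal.

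The second gap is your final paragraph. Even granting that $\Gamma/A$ is a cycle $C_r$ with $A$ inducing a regular abelian group on each block, the conclusion is still far away: one must prove that the blocks have size a power of $2$, that $A_v$ acts on the two neighbouring blocks in the stated way, and that the global adjacency between consecutive blocks obeys the $(s-1)$-arc rule defining $C(r,s)$. That reconstruction is the actual substance of the Praeger and Praeger--Xu theorems, not bookkeeping, and your sketch simply asserts it (``coordinatize the blocks and show that each has size a power of $2$\,\dots'') before ``leaning on'' \cite{Prager.arcTransDi, PragerXu.twicePrimeValency} ``for the identification''. But those theorems \emph{are} the lemma; once you invoke them, the entire preceding reduction is redundant, and the honest proof is the paper's one-line citation. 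So as a self-contained argument the proposal is incomplete in two essential places, and as an argument modulo citations it is strictly weaker than what the paper does.
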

\begin{proof}
	It follows by \cite[Theorem 2.9]{Prager.arcTransDi} and \cite[Theorem 1]{PragerXu.twicePrimeValency} upon setting $p=2$.
\end{proof}

\subsection{Split Praeger-Xu graphs}\label{secSplitPrXu}
For our purposes,  the \textbf{Split Praeger-Xu graphs} are obtained form the Praeger-Xu graphs via the splitting operation which was introduced in \cite[Construction 9]{PSV.cubicCensus}, and which we will comment upon in Section \ref{secBuddy}.

 Here we  give an explicit description of $SC(r,s)$. Split any vertex of $\vec{C}(r,s)$ into two copies, say $v_+$ and $v_-$. For any arc of $\vec{C}(r,s)$ of the form $(v,u)$, let $v_+$ be adjacent to $v_-$ and $u_-$. From the complementary perspective, the neighbourhood of $v_-$ is made up of $v_+$ plus the two vertices $w_+$ such that $(w,v)$ is an arc of $\vec{C}(r,s)$.

\section{Preliminary results}\label{sec2}

\subsection{Graph-theoretical considerations}
In this section, we develop our tool box that extends outside the scope of proving our main theorems.

\begin{lem}\label{arcTransGirth}
	Let $\Gamma$ be a connected $k$-valent graph, with $k\geq 3$, and let $G$ be an $s$-arc-transitive group of automorphisms of $\Gamma$. Then the girth of $\Gamma$ is greater than $s$, i.e. $g(\Gamma)\geq s+1$.
\end{lem}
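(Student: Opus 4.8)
The plan is to argue by contradiction: I assume $g(\Gamma)=g\le s$ and produce an automorphism that must simultaneously fix and move a vertex. First I would record the elementary reduction that an $s$-arc-transitive group is also $t$-arc-transitive for every $0\le t\le s$: since $\Gamma$ has valency $k\ge 3\ge 2$, every $t$-arc extends to an $s$-arc (at each step the last vertex has a neighbour different from the previous one, so non-backtracking extensions exist), and hence transitivity on $s$-arcs restricts to transitivity on $t$-arcs by mapping one chosen $s$-extension to another. In particular $G$ is $g$-arc-transitive. (Alternatively, one can bypass this reduction by wrapping the arcs constructed below around the cycle until they have full length $s$, and applying the hypothesis directly.)

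Next I would fix a shortest cycle $C=(w_0,w_1,\dots,w_{g-1},w_0)$, whose vertices are pairwise distinct because $C$ has minimal length and $g\ge 3$. Starting from the common $(g-1)$-arc $(w_0,\dots,w_{g-1})$, I build two $g$-arcs: the \emph{closing} arc $a_1=(w_0,w_1,\dots,w_{g-1},w_0)$, which returns to its initial vertex, and a \emph{diverging} arc $a_2=(w_0,w_1,\dots,w_{g-1},z)$, where $z$ is a neighbour of $w_{g-1}$ distinct from both $w_{g-2}$ and $w_0$. Such a $z$ exists precisely because $k\ge 3$: the vertex $w_{g-1}$ has at least three neighbours, two of which are the cycle neighbours $w_{g-2}$ and $w_0$. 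I would then check that $a_1$ and $a_2$ are genuine $g$-arcs, the only nontrivial point being that any three consecutive vertices are pairwise distinct; this follows from the distinctness of the cycle vertices together with the choice $z\notin\{w_{g-2},w_0\}$.

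Finally, by $g$-arc-transitivity there is $h\in G$ with $a_1^h=a_2$. Comparing coordinates, $h$ fixes each of $w_0,\dots,w_{g-1}$, since the first $g$ entries of $a_1$ and $a_2$ coincide, while the last entries give $w_0^{\,h}=z$. As the very first coordinate already forces $w_0^{\,h}=w_0$, we obtain $w_0=z$, contradicting $z\neq w_0$. Hence no cycle of length at most $s$ exists and $g(\Gamma)\ge s+1$. The main obstacle, and the only place where the hypothesis $k\ge 3$ is genuinely used, is guaranteeing the extra neighbour $z$ (and verifying that the two sequences are honest arcs); that this assumption cannot be dropped is witnessed by the $2$-valent cycles $C_n$, which are $s$-arc-transitive for every $s$ yet have girth $n$.
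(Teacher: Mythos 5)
Your proof is correct and takes essentially the same approach as the paper: both fix a shortest cycle, compare the \emph{closing} arc with a \emph{diverging} arc obtained from the third neighbour guaranteed by $k\geq 3$, and derive a contradiction with arc-transitivity. The only cosmetic difference is that you first reduce to $g$-arc-transitivity via the standard extension argument, whereas the paper extends both arcs directly to length $s$ --- an alternative you yourself point out.
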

\begin{proof}
	Let $ v_0, v_1, \dots, v_{\ell-2}, v_{\ell-1}$ be the vertices of a cycle of $\Gamma$ of length $\ell$, where $v_i$ is adjacent to $v_{i+1}$, for every $i$. (Computations are performed $\mod \ell$.) We argue by contradiction and we suppose $\ell\le s$. Consider two  $s$-arcs in $\Gamma$ of the form
	\begin{align*}
&	\left( v_0, v_1, \dots, v_{\ell-2}, v_{\ell-1}, v_0, \dots \right),\\
&	\left( v_0, v_1, \dots, v_{\ell-2}, v_{\ell-1}, w, \dots \right),
	\end{align*}
	where $w \in \Gamma(v_{\ell-1})\setminus\{v_0,v_{\ell-2}\}$. As $G$ is transitive on $s$-arcs and the previous $s$-arcs are not $G$-conjugate, we find a contradiction. 
\end{proof}

\begin{lem}\label{transB1}
	Let $\Gamma$ be a finite connected graph and let $v\in V\Gamma$ be a vertex. For each $w\in \Gamma(v)$, let $t_w$ be an automorphism of $\Gamma$ with $v^{t_w}=w$. Then $T:=\langle t_w\mid w\in \Gamma(v)\rangle$ is vertex-transitive on $\Gamma$.
\end{lem}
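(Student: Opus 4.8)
The plan is to analyze the orbit $\Delta := v^T$ of the base vertex $v$ under $T$ and to show, using connectedness of $\Gamma$, that this orbit is all of $V\Gamma$. The whole argument hinges on a single closure property: that $\Delta$ is closed under taking neighbours. Granting this, since $v\in\Delta$ and $\Gamma$ is connected, a routine induction on the length of a path from $v$ to an arbitrary vertex $u$ shows $u\in\Delta$, hence $\Delta = V\Gamma$ and $T$ is vertex-transitive.

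First I would record the base case. By the very definition of the generators, for each $w\in\Gamma(v)$ we have $w=v^{t_w}$ with $t_w\in T$, so $\Gamma(v)\subseteq\Delta$. This already places the entire neighbourhood of $v$ inside the orbit.

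The key step is the closure property: if $u\in\Delta$ and $u'\in\Gamma(u)$, then $u'\in\Delta$. To see this, write $u=v^g$ for some $g\in T$. Since $g$ is an automorphism of $\Gamma$, applying $g^{-1}$ carries the edge $\{u,u'\}$ to an edge at $v$; that is, $u'^{\,g^{-1}}\in\Gamma(v^{g^{-1}})=\Gamma(v)$. Thus $u'^{\,g^{-1}}=w$ for some $w\in\Gamma(v)$, and therefore
\begin{equation*}
	u' = w^{g} = \bigl(v^{t_w}\bigr)^{g} = v^{\,t_w g}.
\end{equation*}
Because $T$ is a group, $t_w g\in T$, and so $u'\in v^T=\Delta$, as claimed. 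Here it is essential that $g\in T$ implies $g^{-1}\in T$, which is exactly why $T$ is taken to be the \emph{group} generated by the $t_w$ rather than merely the set of generators.

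I do not expect any genuine obstacle: the statement is a soft combinatorial fact, and the only point requiring care is the manipulation in the closure step, where one must correctly use that each $g\in T$ is a graph automorphism (so it preserves adjacency and $g^{-1}$ maps $\Gamma(u)$ onto $\Gamma(v)$) and that $T$ is closed under composition. Everything else is a standard ``orbit is open and closed in a connected graph'' argument.
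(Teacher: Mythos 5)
Your proof is correct and is essentially the paper's argument in different clothing: the paper also inducts along a path from $v$, pulls the new vertex back into $\Gamma(v)$ via the inverse of a group element already constructed (its $u' := u^{t^{-1}} \in \Gamma(v)$ is exactly your $w = u'^{\,g^{-1}}$), and then composes with the corresponding generator $t_{u'}$. Your ``orbit closed under neighbours'' packaging is just a rephrasing of the same inductive step, so there is nothing to add.
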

\begin{proof}Let $u\in V\Gamma$. As $\Gamma$ is connected, we prove the existence of $t_u\in T$ with $v^{t_u}=u$ arguing by induction on the minimal distance $d:=d(v,u)$ from $v$ to $u$ in $\Gamma$. When $d=0$, that is, $v=u$, we may take $t_u$ to be the identity of $T$. Suppose then $d>0$. Let $v_0,\ldots,v_d$ be a path of distance $d$ from $v=v_0$ to $u=v_d$ in $\Gamma.$ Now, $d(v,v_{d-1})=d-1$ and hence, by induction, there exists $t\in T$ with $v^{t}=v_{d-1}$. Set $u':=u^{t^{-1}}$. As $u=v_d\in \Gamma(v_{d-1})$, we have $$u'=u^{t^{-1}}\in \Gamma(v_{d-1})^{t^{-1}}=\Gamma(v_{d-1}^{t^{-1}})=\Gamma(v).$$ 
By hypothesis, $t_{u'}\in T$ and $v^{t'}=u'$. Therefore, $v^{t_{u'}t}=u'^{t}=u$ and we may take $t_u:=t_{u'}t$.
\end{proof}

\begin{lem}[\cite{GoRo}, Lemma~3.3.3]\label{2edgeConnected}
	Let $\Gamma$ be a finite connected vertex-transitive graph of valency $k$. Then $\Gamma$ is $k$-edge-connected, i.e. $\Gamma$ remains connected upon eliminating any $m$ edges, with $m\leq k-1$.
\end{lem}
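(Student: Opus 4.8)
The plan is to show that the edge connectivity $\lambda$ of $\Gamma$ equals the valency $k$; since removing the $k$ edges incident to a single vertex already disconnects $\Gamma$ we have $\lambda\le k$, so it suffices to prove $\lambda\ge k$. I would argue by contradiction, assuming $\lambda\le k-1$, and work with the \emph{edge-boundary} function $X\mapsto|\partial X|$, where $\partial X$ denotes the set of edges with exactly one endpoint in $X$. Call a proper nonempty $X\subsetneq V\Gamma$ a \emph{fragment} if $|\partial X|=\lambda$, and an \emph{atom} a fragment of minimum cardinality $\alpha$. Since $\partial(V\Gamma\setminus X)=\partial X$, complementation lets me fix an atom $A$ with $\alpha=|A|\le|V\Gamma|/2$.

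The central step, and the main obstacle, is to prove that distinct atoms are disjoint. Here I would invoke the submodularity inequality $|\partial(X\cap Y)|+|\partial(X\cup Y)|\le|\partial X|+|\partial Y|$, valid for all $X,Y\subseteq V\Gamma$. Given $g\in\Aut(\Gamma)$ with $A\cap A^g\ne\emptyset$ and $A\ne A^g$, the set $A\cap A^g$ is nonempty, proper, and strictly smaller than an atom, so by minimality it cannot be a fragment, which forces $|\partial(A\cap A^g)|\ge\lambda+1$; meanwhile $\alpha\le|V\Gamma|/2$ guarantees $|A\cup A^g|\le 2\alpha-1<|V\Gamma|$, so $A\cup A^g$ is proper and $|\partial(A\cup A^g)|\ge\lambda$. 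Substituting into submodularity yields $2\lambda+1\le 2\lambda$, a contradiction; hence atoms are pairwise equal or disjoint. The delicate points are the minimality bookkeeping and the use of $\alpha\le|V\Gamma|/2$ to exclude the degenerate case $A\cup A^g=V\Gamma$.

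Next I would exploit vertex-transitivity. Since $\Aut(\Gamma)$ is transitive and permutes the atoms, which are pairwise disjoint and cover $V\Gamma$, these atoms form a system of imprimitivity. A standard orbit argument then shows the setwise stabilizer $\Aut(\Gamma)_A$ acts transitively on $A$: if $u,w\in A$ and $u^g=w$, then $A$ and $A^g$ share $w$, so $A^g=A$ and $g\in\Aut(\Gamma)_A$. Consequently the induced subgraph $\Gamma[A]$ is vertex-transitive, hence regular of some internal valency $d$ with $0\le d\le\alpha-1$, and every vertex of $A$ sends exactly $k-d$ edges outside $A$, giving $\lambda=\alpha(k-d)$.

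Finally I would close with a short case analysis on the size of the atom. As $\Gamma$ is connected we have $\lambda\ge 1$, whence $d\le k-1$. If $\alpha\ge k+1$, then $\lambda=\alpha(k-d)\ge\alpha\ge k+1$; if $1\le\alpha\le k$, then $d\le\alpha-1$ gives $\lambda\ge\alpha(k+1-\alpha)$, and the concave map $\alpha\mapsto\alpha(k+1-\alpha)$ attains its minimum over $\{1,\dots,k\}$ at the endpoints, where it equals $k$. In either case $\lambda\ge k$, contradicting $\lambda\le k-1$ and completing the argument. I expect the submodularity-based disjointness of atoms to be the only genuinely subtle point, the remaining steps being routine.
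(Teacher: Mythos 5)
Your proof is correct and is essentially the same argument as the one behind the paper's citation: the paper offers no proof of its own (it simply quotes Lemma~3.3.3 of Godsil--Royle), and that source proves the result by exactly your route --- fragments and atoms of minimum edge cuts, submodularity of the edge-boundary to force distinct atoms to be disjoint, atoms as blocks of imprimitivity so that the induced subgraph is vertex-transitive and $d$-regular, and the count $\lambda=\alpha(k-d)\ge k$. There is nothing to add.
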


A general result on the fixed-point-ratio of Cayley graphs can be proven regardless of the valency.
\begin{lem}\label{Cayley}Let $G$ be a finite group, let $S$ be an inverse closed non-empty subset of $G$, let $\Gamma:=\mathrm{Cay}(G,S)$ and let $g\in G\setminus\{1\}$. If $\mathrm{fpr}(E\Gamma,g)\ne 0$, then $g^2=1$ and
	\begin{equation*}
	\mathrm{fpr}(E\Gamma, g)= \frac{|g^G\cap S|}{|S||g^G|}, 
	\end{equation*}
where $g^G:=\{hgh^{-1}\mid h\in G\}$ is the conjugacy class of $g$ in $G$. In particular, $\mathrm{fpr}(E\Gamma,g)\le 1/|S|$ and the	
	equality is attained if and only if $g^G\subseteq S$. 
\end{lem}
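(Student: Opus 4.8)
The plan is to analyze the action of $g$ through the right regular representation of $G$, under which $g$ sends each vertex $x$ to $xg$ and each edge $\{a,b\}$ to $\{ag,bg\}$. Since the right regular representation is regular, the non-identity element $g$ fixes no vertex; consequently, if $g$ fixes an edge $\{a,b\}$, it cannot fix its two endpoints and must instead interchange them, so $a^g=b$ and $b^g=a$. Applying $g$ twice to $a$ then gives $a=a^{g^2}=ag^2$, whence $g^2=1$. This already settles the first assertion: whenever $\mathrm{fpr}(E\Gamma,g)\neq 0$, we have $g^2=1$.

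Assuming $g^2=1$, I would next describe the fixed edges explicitly. By the previous step, every fixed edge has the form $\{a,ag\}$ for some $a\in G$, and conversely $\{a,ag\}$ is a genuine edge of $\Gamma$ precisely when $a(ag)^{-1}=aga^{-1}$ lies in $S$ (using $g^{-1}=g$). Thus the fixed edges are parametrized by the set $\{a\in G : aga^{-1}\in S\}$, via the map $a\mapsto\{a,ag\}$. Since $ag\neq a$ and the two endpoints $a$ and $ag$ determine the same edge, this parametrization is exactly $2$-to-$1$, so $|\mathrm{Fix}(E\Gamma,g)|=\tfrac12\,|\{a\in G : aga^{-1}\in S\}|$.

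It then remains to evaluate this cardinality by a standard orbit-counting argument. For a fixed element $s$ conjugate to $g$, the solutions $a$ of $aga^{-1}=s$ form a single coset of the centralizer $C_G(g)$, hence number $|C_G(g)|=|G|/|g^G|$. Summing over $s\in g^G\cap S$ yields $|\{a : aga^{-1}\in S\}|=|g^G\cap S|\,|G|/|g^G|$. Dividing by $2$ and then by the edge count $|E\Gamma|=|G||S|/2$ produces the claimed formula $\mathrm{fpr}(E\Gamma,g)=|g^G\cap S|/(|S||g^G|)$. The bound $\mathrm{fpr}(E\Gamma,g)\le 1/|S|$ follows from $|g^G\cap S|\le|g^G|$, with equality exactly when $g^G\cap S=g^G$, i.e. $g^G\subseteq S$.

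The argument is essentially a counting exercise, so I do not anticipate a genuine obstacle. The two points requiring care are the factor of $2$ arising from the $2$-to-$1$ correspondence (one must check that $a\neq ag$, which is immediate since $g\neq 1$, and that both endpoints satisfy the membership condition, which holds because $(ag)g(ag)^{-1}=aga^{-1}$), and the clean passage from counting elements $a$ with $aga^{-1}\in S$ to the conjugacy-class description via the centralizer cosets.
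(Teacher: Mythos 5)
Your proof is correct and follows essentially the same route as the paper's: both derive $g^2=1$ from the fact that a fixed edge must have its endpoints swapped under the (fixed-point-free) right regular action, both count the fixed edges via cosets of the centralizer $\mathbf{C}_G(g)$ indexed by the conjugates $s=aga^{-1}\in g^G\cap S$, and both divide by $|E\Gamma|=|G||S|/2$. The only difference is bookkeeping: the paper first partitions $E\Gamma$ into the matchings $E_s=\{\{x,sx\}\mid x\in G\}$, $s\in S$, and counts fixed edges within each, whereas you parametrize the fixed edges directly by $\{a\in G: aga^{-1}\in S\}$ via the $2$-to-$1$ map $a\mapsto\{a,ag\}$ — the resulting counts are identical.
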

\begin{proof}
Suppose $\mathrm{fpr}(E\Gamma,g)\ne 0$. We let ${\bf C}_G(g)$ denote the centralizer of $g$ in $G$.

For each $s\in S$, let $E_s:=\{\{x,sx\}\mid x\in G\}$. Observe that $E_s$ is a complete matching of $\Gamma$ and that $\{E_s\mid s\in S\}$ is a partition of the edge set $E\Gamma$.

Let $s\in S$. Suppose $E_s\cap\mathrm{Fix}(E\Gamma,g)\ne \emptyset$ and fix $\{\bar{x},s\bar{x}\}\in E_s\cap\mathrm{Fix}(E\Gamma,g)$. As $g$ fixes the edge $\{\bar{x},s\bar{x}\}$, we have $\bar{x}g=s\bar{x}$ and $s\bar{x}g=\bar{x}$. We deduce $g^2=1$ and $s=\bar{x}g\bar{x}^{-1}$. In other words, $g$ has order $2$ and $g$ has a conjugate in $S$. Now, for every $\{x,sx\}\in E_s$, with a similar computation, we obtain that $\{x,sx\}\in \mathrm{Fix}(E\Gamma,g)$ if and only if $s=xgx^{-1}$. Thus $\bar{x}g\bar{x}^{-1}=xgx^{-1}$ and $x\in \bar{x}{\bf C}_G(g)$. In particular,
$E_s\cap \mathrm{Fix}(E\Gamma,g)=\{\{\bar{x}h,s\bar{x}h\}\mid h\in {\bf C}_G(x)\}$
and hence
$$|E_s\cap\mathrm{Fix}(E\Gamma,g)|=\frac{|{\bf C}_G(g)|}{2}.$$

The previous paragraph has established that $g$ has order $2$. Moreover, for each $s\in S$, $E_s\cap\mathrm{Fix}(E\Gamma,g)\ne \emptyset$ if and only if $s\in g^G$. Furthermore, in the case that $s\in g^G$, the cardinality of $E_s\cap\mathrm{Fix}(E\Gamma,g)$ does not depend on $s$ and equals $|{\bf C}_G(g)|/2$. Therefore,
\begin{align*}
\mathrm{fpr}(E\Gamma,g)&=
\frac{|g^G\cap S||{\bf C}_G(g)|/2}{|E\Gamma|}=
\frac{|g^G\cap S||{\bf C}_G(g)|/2}{|S||G|/2}=
\frac{|g^G\cap S|}{|S||G:{\bf C}_G(g)|}=
\frac{|g^G\cap S|}{|S||g^G|}.
\end{align*}

Since $|g^G\cap S|\le |g^G|$, we have $\mathrm{fpr}(E\Gamma,S)\le 1/|S|$. Moreover, the equality is attained if and only if $g^G\cap S=g^G$, that is, $g^G\subseteq S$.
\end{proof}

The next lemma studies the nature of fixed edges in a Praeger-Xu graph.
\begin{lem}\label{gInK}
	Let $\Gamma=C(r,s)$ be a Praeger-Xu graph and let $g\in \Aut(\Gamma)$  with $g\ne 1$ and with $\fpr(E\Gamma, g)> 1/3$. Then  $3s < 2r-3$ and, either $g\in K$ or $(r,s)=(r,1)$. In particular, $g$ fixes an edge if and only if $g$ fixes both of its ends. (The group $K$ is defined in Section~$\ref{secPrXu}$.)
\end{lem}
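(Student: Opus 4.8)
The plan is to coordinatize $C(r,s)$ explicitly and then exploit the normal subgroup $K\cong C_2^r$. I identify each vertex with a pair consisting of a \emph{position} $x\in\mathbb{Z}_r$ (the starting point of the underlying $(s-1)$-arc) together with a sign vector in $\mathbb{Z}_2^s$ indexed by the $s$ consecutive positions $x,x+1,\dots,x+s-1$; likewise each edge is encoded by a \emph{window} $\{x,x+1,\dots,x+s\}$ of $s+1$ consecutive positions together with $s+1$ signs. The orbits of $K$ on vertices are the fibres of the position map, so $V\Gamma/K\cong\mathbb{Z}_r$ and the quotient graph $\Gamma/K$ is the $r$-cycle; correspondingly the $K$-orbits on $E\Gamma$ are exactly the $r$ windows, i.e.\ the edges of $\Gamma/K$. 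By Lemma~\ref{AutPrXu}, $\Aut(C(r,s))=H$ whenever $r\ne 4$, so for such $r$ any $g$ has a well-defined image $\bar g$ in $H/K\cong D_r$, and Lemma~\ref{fprQuotient} (applied with $N=K$ acting on $E\Gamma$) gives $\fpr(E\Gamma,g)\le\fpr(E\Gamma/K,\bar g)$.

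First I would settle the case $g\in K$, which is where the bound $3s<2r-3$ originates. Writing $g=\prod_{i\in A}\tau_i$ with $\emptyset\ne A\subseteq\mathbb{Z}_r$, an edge is fixed by $g$ precisely when its window of $s+1$ positions is disjoint from $A$, and in that case \emph{both} endpoints are fixed; hence $\fpr(E\Gamma,g)=n_A/r$, where $n_A$ is the number of length-$(s+1)$ windows in $\mathbb{Z}_r$ avoiding $A$. Counting these windows inside the maximal runs of non-$A$ positions and optimising over the possible run-length distributions, the inequality $n_A/r>1/3$ forces $3s<2r-3$, the extremal configuration being $|A|=1$. Since elements of $K$ fix every position, they cannot interchange the two ends of an edge, which already yields the ``in particular'' clause in this case.

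It then remains to exclude $g\notin K$ (equivalently $\bar g\ne 1$) when $s\ge 2$. If $\bar g$ is a nontrivial rotation of $\Gamma/K$ it fixes no edge of the $r$-cycle, so $\fpr(E\Gamma,g)=0$, a contradiction. If $\bar g$ is a reflection, a fixed edge of $g$ must have its window centred on the reflection axis, so at most two window-positions contribute; within such a window $g$ acts by reversing and sign-flipping the $s+1$ coordinates, and a short fixed-point count over the $\lceil(s+1)/2\rceil$ orbits of the reversal bounds the number of admissible sign vectors by $2^{\lceil(s+1)/2\rceil}$. This gives $\fpr(E\Gamma,g)\le 2\cdot 2^{\lceil(s+1)/2\rceil}/(r\,2^{s+1})\le 1/r\le 1/3$, again contradicting $\fpr(E\Gamma,g)>1/3$. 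Hence for $r\ne 4$ one always has $g\in K$, and the previous paragraph applies.

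The residual difficulty, which I expect to be the main obstacle, is the graph $C(4,s)$: by Lemma~\ref{AutPrXu} its automorphism group strictly contains $H$ for $s\le 3$, so the reduction to $\bar g\in D_r$ is no longer available. Here I would argue that $\fpr(E\Gamma,g)>1/3$ already forces $3s<2r-3=5$, and hence $s=1$. Indeed, for $s\ge 4$ one has $\Aut(C(4,s))=H$ and the analysis above applies verbatim (yielding a contradiction), while the two graphs $C(4,2)$ and $C(4,3)$ are finite and can be dispatched directly using the explicit description of the extra automorphisms recorded in Lemma~\ref{AutPrXu} (or by a short computation). Assembling the cases, whenever $\fpr(E\Gamma,g)>1/3$ we obtain $3s<2r-3$ together with the dichotomy that either $g\in K$ or $s=1$; and in the former case $g$ fixes an edge if and only if it fixes both of its ends.
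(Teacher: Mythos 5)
Your main line of argument (the case $r\neq 4$) is correct and runs parallel to the paper's proof: where the paper writes $g=\tau\rho^i\sigma^\varepsilon$ and derives contradictions from block equations on the fibres $\Delta_x$, you pass to the quotient $r$-cycle via Lemma~\ref{fprQuotient}, which disposes of rotations especially cleanly; your count for $g\in K$ (at most $r-s-1$ of the $r$ windows avoid a nonempty $A\subseteq\mathbb{Z}_r$, whence $\fpr(E\Gamma,g)\le (r-s-1)/r$ and $3s<2r-3$) is the same as the paper's count for a single $\tau_i$, and the reflection bound $2\cdot 2^{\lceil(s+1)/2\rceil}/(r2^{s+1})\le 1/r\le 1/3$ is right.

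The gap is in the exceptional case. The lemma's final clause --- $g$ fixes an edge if and only if it fixes both of its ends --- is asserted for \emph{every} non-identity $g$ with $\fpr(E\Gamma,g)>1/3$, but your closing sentence proves it only ``in the former case'', i.e.\ when $g\in K$. The missing case is precisely $(r,s)=(4,1)$, where $\Gamma\cong K_{4,4}$, $\Aut(\Gamma)=S_4\wr S_2$ strictly contains $H$, and there genuinely are elements outside $K$ with $\fpr(E\Gamma,g)=1/2>1/3$: for instance, the automorphism swapping $(0,0)$ with $(2,0)$ and fixing every other vertex fixes $8$ of the $16$ edges and is not in $K$. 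For such elements the clause must still be verified; concretely, one checks that any automorphism interchanging the two sides of the bipartition --- and these are the only elements that can fix an edge while swapping its ends --- fixes at most $4$ edges (the fixed points of the induced permutation of one side), hence has $\fpr\le 1/4$, so every $g$ with $\fpr(E\Gamma,g)>1/3$ preserves the two sides and therefore fixes each of its fixed edges pointwise. This short computation in $S_4\wr S_2$ is exactly what the paper's parenthetical remark about $C(4,1)$ records; without it, what you have proved is strictly weaker than the statement of the lemma. (A minor additional slip: your appeal to ``$s\ge 4$ with $r=4$'' is vacuous, since $C(r,s)$ is only defined for $1\le s\le r-1$, so for $r=4$ the only cases are $s\in\{1,2,3\}$.)
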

\begin{proof}
The lexicographic product $C(4,1)\cong K_{4,4}$ admits automorphisms $x$ fixing $8$ edges and hence $\fpr(E\Gamma,x)=8/16=1/2>1/3$. (The non-identity elements in $\mathrm{Aut}(C(4,1))$ with $\fpr(E\Gamma,x)>1/3$ are not necessarily in $K$, but they fix an edge if and only if they fix both of its ends.) Similarly, it can be verified that, for every $x\in \mathrm{Aut}(C(4,2))$ with $x\ne 1$, we have $\fpr(E\Gamma,x)\le 8/32=1/4$. Furthermore, for every $x\in \mathrm{Aut}(C(4,3))$ with $x\ne 1$, we have $\fpr(E\Gamma, x)=8/64=1/8$. In particular, when $r:=4$, the result follows from these computations.

	\par Suppose $r\neq 4$. By Lemma~\ref{AutPrXu}, $\Aut(\Gamma)=H=K\langle \rho, \sigma \rangle$. In particular, 
	\begin{equation*}
	g=\tau\rho^i\sigma^\varepsilon, \hspace{4.5mm} \textup{for some \;} \tau\in K, i\in \mathbb{Z}_r, \varepsilon \in \mathbb{Z}_2.
	\end{equation*}
	Denote by $\Delta_x$ the set of $(s-1)$-arcs in $\vec{C}(r,1)$ starting at $(x,0)$ or at $(x,1)$. From the definition of the vertex set of $C(r,s)$, we have $\Delta_x\subseteq VC(r,s)$, $|\Delta_x|=2^s$ and 
	$$VC(r,s)=\bigcup_{x\in\mathbb{Z}_r}\Delta_x.$$ Moreover, $\Delta_x$ is a $K$-orbit and the subgraph induced by $\Gamma$ on $\Delta_x\cup \Delta_{x+1}$ is the disjoint union of cycles of length $4$. Observe that, for any $x\in \mathbb{Z}_r$, 
\begin{align}\label{new1}
\Delta_x^\rho&=\Delta_{x+1}, &\Delta_x^\sigma=\Delta_{-x-s+1}.
\end{align}

We start by proving that $g\in K$.

\smallskip 
	
\noindent\textsc{Suppose $\varepsilon=0$. }
Let $\{a,b\}\in\mathrm{Fix}(E\Gamma,g)$. Replacing $a$ with $b$ if necessary, we may suppose that $a\in \Delta_x$ and $b\in \Delta_{x+1}$, for some $x\in \mathbb{Z}_r$. If $a^g=a$ and $b^g=b$, we have $\Delta_x^g=\Delta_{x}$ and $\Delta_{x+1}^g=\Delta_{x+1}$. Now,~\eqref{new1} yields $x+i=x$ and $(x+1)+i=x+1$, that is, $i=0$. Therefore $g\in K$. Similarly, if $a^g=b$ and $b^g=a$, we have $\Delta_x^g=\Delta_{x+1}$ and $\Delta_{x+1}^g=\Delta_{x}$. Now,~\eqref{new1} yields $x+i=x+1$ and $(x+1)+i=x$, that is, $2=0$. However, this implies $r=2$, which is a contradiction because $r\ge 3$.

\smallskip
	
\noindent\textsc{Suppose $\varepsilon=1$. }Since $\langle\rho,\sigma\rangle$ is a dihedral group of order $2r$, replacing $g$ by a suitable conjugate if necessary, we may suppose that either  $r$ is odd and $i=0$, or $r$ is even and $i\in \{0,1\}$.

Assume $i=0$. Let $\{a,b\}\in\mathrm{Fix}(E\Gamma,g)$. As above, replacing $a$ with $b$ if necessary, we may suppose that $a\in \Delta_x$ and $b\in \Delta_{x+1}$, for some $x\in \mathbb{Z}_r$.  If $a^g=a$ and $b^g=b$, we have $\Delta_x^g=\Delta_{x}$ and $\Delta_{x+1}^g=\Delta_{x+1}$. Now,~\eqref{new1} yields $-x-s+1=x$ and $-(x+1)-s+1=x+1$, that is, $2=0$. However, this gives rise to the contradiction $r=2$. Similarly, if $a^g=b$ and $b^g=a$, we have $\Delta_x^g=\Delta_{x+1}$ and $\Delta_{x+1}^g=\Delta_{x}$. Now,~\eqref{new1} yields $-x-s+1=x+1$ and $-(x+1)-s+1=x$, that is, $2x+s=0$. When $r$ is odd, the equation $2x+s=0$ has only one solution in $\mathbb{Z}_r$ and, when $r$ is even, the equation $2x+s=0$ has either zero or  two solutions in $\mathbb{Z}_r$ depending on whether $s$ is odd or even. Recalling that the subgraph induced by $\Gamma$ on $\Delta_x\cup \Delta_{x+1}$ is a disjoint union of cycles of length $4$, we obtain that
\[
|\mathrm{fpr}(E\Gamma,g)|\le
\begin{cases}
\frac{|\Delta_x|}{|E\Gamma|}=\frac{1}{2r}&\textrm{if }r\textrm{ is odd},\\
2\cdot \frac{|\Delta_x|}{|E\Gamma|}=\frac{1}{r}&\textrm{if }r\textrm{ is even}.
\end{cases}
\]
In both cases, we have $\mathrm{fpr}(E\Gamma,g)\le 1/4$, which is a contradiction.

Assume $i=1$. Observe that this implies that $r$ is even. Here the analysis is entirely similar. Let $\{a,b\}\in\mathrm{Fix}(E\Gamma,g)$. As above, replacing $a$ with $b$ if necessary, we may suppose that $a\in \Delta_x$ and $b\in \Delta_{x+1}$, for some $x\in \mathbb{Z}_r$.  If $a^g=a$ and $b^g=b$, we have $\Delta_x^g=\Delta_{x}$ and $\Delta_{x+1}^g=\Delta_{x+1}$. Now,~\eqref{new1} yields $-(x+1)-s+1=x$ and $-(x+2)-s+1=x$, that is, $2=0$. However, this gives rise to the usual contradiction $r=2$. Similarly, if $a^g=b$ and $b^g=a$, we have $\Delta_x^g=\Delta_{x+1}$ and $\Delta_{x+1}^g=\Delta_{x}$. Now,~\eqref{new1} yields $-(x+1)-s+1=x+1$ and $-(x+2)-s+1=x$, that is, $2x+s+1=0$. As $r$ is even, the equation $2x+s+1$ has either zero or  two solutions in $\mathbb{Z}_r$ depending on whether $s$ is even or odd. Recalling that the subgraph induced by $\Gamma$ on $\Delta_x\cup \Delta_{x+1}$ is a disjoint union of cycles of length $4$, we obtain that
\[
|\mathrm{fpr}(E\Gamma,g)|\le
2\cdot \frac{|\Delta_x|}{|E\Gamma|}=\frac{1}{r}.
\]
Thus, we have $\mathrm{fpr}(E\Gamma,g)\le 1/4$, which is a contradiction.

\smallskip

Since $g\in K$, if $g$ fixes the edge $\{a,b\}\in E\Gamma$, then $g$ fixes both end-vertices $a$ and $b$. It remains to show that $3s<2r-3$. Notice that $\tau_i$ moves precisely those $(s-1)$-arcs of $\vec{C}(r,1)$ that pass through one of the vertices $(i,0)$ or $(i,1)$. Therefore, $\tau_i$, as an automorphism of $C(r,s)$, fixes all but $s2^s$ vertices, thus it fixes all but those $(s+1)2^{s+1}$ edges which are incident with such vertices. Since any element in $K$ is obtained as a product of some $\tau_i$, such an element fixes at most as many edges as a single $\tau_i$. Hence 
	\begin{equation*}
	\frac{1}{3} < \fpr(E\Gamma, g) \leq \fpr(E\Gamma, \tau_i) = \frac{\left(r-(s+1)\right)2^{s+1}}{r2^{s+1}} = 	\frac{r-s-1}{r}.\qedhere
	\end{equation*}
\end{proof}

\begin{lem}\label{GconjugateH}
	Let $\Gamma=C(r,s)$ be a Praeger-Xu graph, let $G$ be a vertex- and edge-transitive group of automorphism of $\Gamma$ containing a non-identity element $g$ fixing more that $1/3$ of the edges and with $G$ not $2$-arc-transitive. Then $G$ is $\Aut(\Gamma)$-conjugate to a subgroup of $H$ as defined in Section~$\ref{secPrXu}$.
\end{lem}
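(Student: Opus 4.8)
The plan is to leverage the algebraic characterization of Praeger-Xu graphs (Lemma \ref{charPrXu}) by exhibiting inside $G$ an abelian normal subgroup that fails to be semiregular, and then to show that this subgroup can be taken to be (a conjugate of) $K$, which will pin down $G$ inside $H$. The starting observation is that the element $g$ is a highly constrained object: since $\fpr(E\Gamma,g)>1/3$, Lemma \ref{gInK} applies and tells us that $g$ is an involution whose fixed edges are exactly the edges joining two of its fixed vertices, and moreover that $3s<2r-3$ (so in particular $r\neq 4$ and $s<r$). Working inside $\Aut(\Gamma)=H$ via Lemma \ref{AutPrXu}, we know $g$ is conjugate into $K$; so after replacing $G$ by an $\Aut(\Gamma)$-conjugate we may assume $g\in K$ from the outset. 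The task is then to propagate the ``$K$-ness'' of this single element to the whole of $G$.

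The key step is to produce an abelian normal subgroup of $G$ that is not semiregular. Consider the normal closure $N:=\langle g^G\rangle$ of $g$ in $G$, or better, the subgroup generated by the $G$-conjugates of $g$ that lie in $K$. Since $g$ fixes many vertices (all but $s2^s$ of them, by the counting in the proof of Lemma \ref{gInK}), $g$ is patently not semiregular, and neither is any normal subgroup of $G$ containing it. The main work is to argue that one can find such a normal subgroup which is \emph{abelian}. Here I would use that $K\cong C_2^r$ is abelian and that the conjugates of $g$ under the vertex- and edge-transitive group $G$, being involutions fixing the same local pattern of vertices, should again lie in $K$: the geometric meaning of membership in $K$ (fixing setwise each fibre $\Delta_x$ and acting within the $4$-cycles between consecutive fibres) is preserved under graph automorphisms that respect the cyclic quotient structure. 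Once $N$ is shown to be abelian and normal and non-semiregular, Lemma \ref{charPrXu} is vacuously reconfirmed, but more usefully $N\le K$ gives a canonical abelian normal subgroup that the rest of $G$ must normalize.

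With such an $N\le K$ in hand, I would identify $N$ with the unique maximal abelian normal subgroup that is non-semiregular, and show $K$ itself is normalized by $G$: since $K$ is characteristic in $\Aut(\Gamma)=H$ for $r\neq 4$ (being, e.g., the subgroup of automorphisms fixing every fibre $\Delta_x$ setwise, or recoverable as a suitable term of a characteristic series), and $G\le\Aut(\Gamma)$, we automatically get that $G$ normalizes $K$. Then $G\cap K$ is normal in $G$, abelian, and non-semiregular (it contains $g$), and $G/(G\cap K)$ embeds into $H/K\cong D_r=\langle\rho,\sigma\rangle$. The hypothesis that $G$ is not $2$-arc-transitive rules out the exceptional overgroups and forces $G$ to normalize the cyclic-plus-reflection structure rather than mix the fibres transitively among $2$-arcs. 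Assembling these pieces, every element of $G$ is a product of an element of $K$ with an element of $\langle\rho,\sigma\rangle$, i.e.\ $G\le K\langle\rho,\sigma\rangle=H$, as required.

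The main obstacle I anticipate is the abelianness claim in the second paragraph: it is easy to see $g$ is a non-semiregular involution, but a priori the normal closure $\langle g^G\rangle$ need not be abelian, and one must genuinely use that every $G$-conjugate of $g$ still satisfies the fixed-edge condition of Lemma \ref{gInK} and hence lands in $K$. Controlling these conjugates — showing they cannot escape $K$ once $r\neq 4$ and $3s<2r-3$ — is where the edge-transitivity of $G$ and the rigidity of the fibre structure $\{\Delta_x\}$ must be brought to bear, and is the step where I expect the argument to require the most care. The characteristic nature of $K$ in $H$ (for $r\neq 4$) is the other linchpin; it is plausible but should be justified from Lemma \ref{AutPrXu} rather than merely asserted.
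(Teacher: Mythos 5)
Your argument has a genuine gap, and it sits exactly where the lemma has actual content. You claim that $3s<2r-3$ implies $r\neq 4$; this is false, since $(r,s)=(4,1)$ satisfies $3\cdot 1 < 5 = 2\cdot 4-3$. This is not a minor slip: for $r\neq 4$, Lemma~\ref{AutPrXu} gives $\Aut(\Gamma)=H$, so $G\le \Aut(\Gamma)=H$ holds trivially and no conjugation (nor any machinery about normal closures, abelian normal subgroups, or $K$ being characteristic) is needed. The only case in which the conclusion requires work is precisely $r=4$, i.e.\ $\Gamma\cong C(4,1)\cong K_{4,4}$, where $\Aut(\Gamma)=S_4\wr S_2$ strictly contains $H$ (with index $9$, by Lemma~\ref{AutPrXu}), and a subgroup $G$ satisfying the hypotheses need not lie in $H$ but is only $\Aut(\Gamma)$-conjugate to a subgroup of $H$. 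This is how the paper proceeds: Lemma~\ref{gInK} forces $3s<2r-3$; if $r\neq 4$, then $G\le\Aut(\Gamma)=H$ at once; if $r=4$, then $s=1$ and the statement is verified by a computation inside $\Aut(K_{4,4})=S_4\wr S_2$, using both the vertex- and edge-transitivity of $G$ and the failure of $2$-arc-transitivity. By wrongly excluding $r=4$, your proposal never addresses the one case that matters.

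A second point concerns the case you do treat: the moment you write ``working inside $\Aut(\Gamma)=H$ via Lemma~\ref{AutPrXu}'', you are already done, because $G\le\Aut(\Gamma)=H$ \emph{is} the desired conclusion (with the identity as conjugating element). Everything that follows --- propagating the ``$K$-ness'' of $g$ to its conjugates, the abelian normal closure, the appeal to Lemma~\ref{charPrXu}, the characteristicity of $K$ in $H$ --- is circular or superfluous: the target of the lemma is containment in $H$, not in $K$, and you silently assumed that containment when you identified $\Aut(\Gamma)$ with $H$. (You half-notice this yourself when you remark that Lemma~\ref{charPrXu} is ``vacuously reconfirmed''.) So the proposal establishes nothing beyond what Lemma~\ref{AutPrXu} yields immediately, while omitting the $K_{4,4}$ computation that is the substance of the lemma.
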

\begin{proof}
	By Lemma~\ref{gInK}, $3s < 2r-3$. If $r\ne 4$, then by Lemma \ref{AutPrXu} we have $G\le \mathrm{Aut}(\Gamma)=H$. When $r=4$, then inequality $3s<2r-3$ implies $s=1$. Now, the veracity of this lemma can be verified with a computation in $\mathrm{Aut}(C(4,1))=\mathrm{Aut}(K_{4,4})=S_4\wr S_2$.
\end{proof}

\begin{lem}[\cite{PS.fixedVertex}, Lemma~1.11]\label{quotientCycle} Let $\Gamma$ be a finite connected  $4$-valent graph, let $G$ be a vertex- and edge-transitive group of automorphisms of $\Gamma$, and let $N$ be a minimal normal subgroup of $G$. If $N$ is a $2$-group and $\Gamma/N$ is a cycle of length at least $3$, then $\Gamma$ is isomorphic to a Praeger-Xu graph $C(r,s)$ for some integers $r$ and $s$.
\end{lem}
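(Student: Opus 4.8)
The plan is to deduce the result from the algebraic characterisation in Lemma~\ref{charPrXu}: it suffices to exhibit, inside some vertex- and edge-transitive group of automorphisms of $\Gamma$, an abelian normal subgroup that is \emph{not} semiregular on $V\Gamma$. Since $N$ is a minimal normal subgroup of $G$ and a $2$-group, it is elementary abelian, hence abelian; so if $N$ is not semiregular we are done at once by Lemma~\ref{charPrXu}. The real content is therefore the case in which $N$ is semiregular, and the first task is to record the combinatorial skeleton forced by the hypotheses.

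Write $B_0, \dots, B_{r-1}$ for the $N$-orbits, which are the vertices of the cycle $\Gamma/N$. Because $G$ is vertex-transitive and $N \unlhd G$, the blocks $B_i$ all have the same size $m$, and because $\Gamma/N$ is a loopless cycle no edge of $\Gamma$ lies inside a block, so every neighbour of a vertex of $B_i$ lies in $B_{i-1} \cup B_{i+1}$. A short double count settles the local picture: as $G$ is edge-transitive it is transitive on the $r$ edges of the cycle, so each of these carries the same number $e$ of edges of $\Gamma$, and $re = |E\Gamma| = 2rm$ forces $e = 2m$; since $N$ is transitive on each block the forward valency is constant, whence every vertex has exactly two neighbours in each adjacent block. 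Consequently the bipartite graph induced between consecutive blocks is $2$-regular, and (identifying each block with the regular $N$-set and using that $N$ is elementary abelian) one checks in the semiregular case that it is a disjoint union of $4$-cycles, exactly as in a Praeger--Xu graph.

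The decisive step is to promote this skeleton to a non-semiregular abelian normal subgroup. Let $M \unlhd G$ be the kernel of the action of $G$ on $\Gamma/N$, so that $N \le M$ and $M$ fixes every block setwise while remaining transitive on it. If $M \ne N$, then $M$ is not semiregular, since a semiregular group transitive on a block would be regular there, forcing $|M| = m = |N|$ and hence $M = N$. In this case I would argue, using the rigidity of the $4$-cycle pattern between adjacent blocks together with connectivity (Lemma~\ref{transB1}), that $M$ is elementary abelian, so that $M$ itself is the subgroup required by Lemma~\ref{charPrXu}. If instead $M = N$, then $G$ is too small and the acting group must be enlarged: from the $4$-cycle structure one constructs directly the ``local flip'' automorphisms $\tau_i$ of $\Gamma$ --- each fixing all but a few consecutive blocks --- and shows that together they generate an elementary abelian group $K$ that is normal in a vertex- and edge-transitive overgroup of $G$ (such as $\Aut(\Gamma)$) and visibly not semiregular; Lemma~\ref{charPrXu} then applies.

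I expect the semiregular case to be the main obstacle, and within it the construction and control of the flip group $K$ --- equivalently, the proof that the block kernel $M$ is elementary abelian and, when $M = N$, its recovery inside $\Aut(\Gamma)$. This is essentially the problem of reconstructing the characteristic $C_2 \wr D_r$ structure of a Praeger--Xu graph purely from its cycle quotient, and it is precisely the point at which the $2$-group hypothesis on $N$, the $4$-valence, and edge-transitivity must all be used together.
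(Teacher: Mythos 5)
First, a point of reference: the paper does not prove Lemma~\ref{quotientCycle} at all --- it is quoted from \cite[Lemma~1.11]{PS.fixedVertex} --- so there is no internal proof to compare yours against, and your attempt must stand on its own. Its outline is reasonable: reducing to Lemma~\ref{charPrXu} is a natural strategy, the non-semiregular case is indeed immediate (a minimal normal $2$-subgroup is elementary abelian), and your skeleton in the semiregular case is essentially correct: equal block sizes, no intra-block edges (although your stated reason, that the quotient cycle is loopless, is not the right one --- quotient graphs are loopless by definition; what rules out intra-block edges is edge-transitivity, since otherwise every edge would lie inside a block and the quotient would be edgeless), two neighbours in each adjacent block by the count $re=2rm$, and unions of $4$-cycles between consecutive blocks because $N$ is regular on blocks and elementary abelian.

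The problem is that the argument stops exactly where the content begins: both decisive claims are announced ("I would argue\dots", "one constructs directly\dots") rather than proved, and they are not routine. (a) When $M\ne N$, you need $M$ abelian. Every element of $M$ normalizes the regular group $N^{B_i}$, so its restriction to a block lies in the holomorph $N\rtimes\Aut(N)$, and nothing local prevents restrictions of order greater than $2$ compatible with the coset pattern on both sides (for instance restrictions involving a nontrivial $\alpha\in\Aut(N)$ with $(\alpha+\mathrm{id})(N)\le\langle z_i\rangle$); killing these requires a genuinely global argument that "rigidity plus connectivity" does not supply. (b) When $M=N$, the existence of the local flips $\tau_i$ as automorphisms of $\Gamma$ is essentially equivalent to $\Gamma$ being a Praeger-Xu graph, i.e.\ to the statement being proved, so as written this step is circular. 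Worse, the claim that the flip group would be normal in a vertex- and edge-transitive overgroup "such as $\Aut(\Gamma)$" is false in general: for $\Gamma=C(4,1)\cong K_{4,4}$ the group $K\cong C_2^4$ is \emph{not} normal in $\Aut(\Gamma)=S_4\wr S_2$ (compare Lemma~\ref{AutPrXu}); you would instead have to work with $\langle G,K\rangle$ and argue that $G$ normalizes $K$ because the flips are canonical with respect to the $G$-invariant block system --- which again presupposes the unproven construction. In short, this is a plausible plan organized around Lemma~\ref{charPrXu}, but the two steps you yourself flag as "the main obstacle" are precisely the mathematical content of the lemma, and they are missing.
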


\section{Proof of Theorem~$\ref{thrm:main1}$} \label{tetrageneral}
In this section we prove Theorem~\ref{thrm:main1}. Our proof is divided into two cases, depending on whether $\Gamma$ admits a group of automorphisms acting $2$-arc-transitively or not. 

\subsection{Proof of Theorem~\ref{thrm:main1} when $\Gamma$ is $2$-arc-transitive}\label{secTetra2AT}
 The following lemma involves four graphs not yet considered in this paper, so it is worth to spend some ink here to describe them.
\begin{itemize}
	\item The complete graph $K_5$ is the only sporadic example arising in Theorem~\ref{thrm:main1}, its automorphism group is $S_5$ and each transposition in $S_5$ fixes $4$ edges out of $10$.
	\item The graph $K_{5,5}-5K_2$ is obtained deleting a complete matching from the complete bipartite graph $K_{5,5}$, its automorphism group is $S_5\times C_2$ and every non-identity automorphism fixes at most $3$ edges out of $10$.
	\item The  hypercube $Q_4$ is the Cayley graph
	\begin{equation*}
		Q_4 := \mathrm{Cay} (\mathbb{Z}_2 ^4, \{(1,0,0,0),(0,1,0,0), (0,0,1,0), (0,0,0,1)\}).
	\end{equation*}
	A non-identity automorphism  of $Q_4$ fixes at most $8$ edges out of $32$.
	\item The graph $BCH$ is the bipartite complement of the Heawood graph. The  vertices of $BCH$ can be identified with the $7$ points and the $7$ lines of the Fano plane. The incidence in the graph is given by the anti-flags in the plane, i.e. the point $p$ is adjacent to the line $L$ if, and only if, $p\notin L$. The automorphism group of $BCH$ is isomorphic to $\mathrm{SL}_3(2).2$. A  non-identity automorphism  of $BHC$ fixes at most $4$ edges out of $28$.
\end{itemize}

\begin{lem}\label{TetraSmallGirth}
	Let $\Gamma$ be a finite connected $4$-valent $2$-arc-transitive graph of girth at most $4$, i.e. $g(\Gamma)\in \{3,4\}$. Then one of the following holds:
	\begin{enumerate}
		\item\label{eq:TetraSmallGirth1} $g(\Gamma)=3$ and $\Gamma$  is isomorphic to the complete graph $K_5$;
		\item\label{eq:TetraSmallGirth2} $g(\Gamma)=4$ and $\Gamma$ is isomorphic to $K_{4,4}\cong C(4,1)$;
		\item\label{eq:TetraSmallGirth3} $g(\Gamma)=4$ and $\Gamma$ is isomorphic to $K_{5,5}-5K_2$, $Q_4$ or $BCH$.
	\end{enumerate}
\end{lem}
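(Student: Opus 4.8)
The plan is to classify finite connected $4$-valent $2$-arc-transitive graphs of girth $3$ or $4$ by exploiting the strong local constraints that $2$-arc-transitivity imposes. The key structural observation is that, since $G:=\Aut(\Gamma)$ acts transitively on $2$-arcs, the vertex stabilizer $G_v$ induces a $2$-transitive group on the neighbourhood $\Gamma(v)$, which has size $4$. Thus $G_v^{\Gamma(v)}$ is a $2$-transitive subgroup of $S_4$, hence one of $A_4$, $S_4$, or the sharply $2$-transitive group $\mathrm{AGL}_1(4)\cong C_2^2\rtimes C_3$ of order $12$. This severely restricts how short cycles can sit in $\Gamma$, and I would use a counting argument based on common neighbours to pin down the graph.

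\smallskip

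First I would dispose of the girth $3$ case. If $g(\Gamma)=3$, a triangle $v_0v_1v_2$ exists, and $2$-arc-transitivity forces every $2$-arc $(a,b,c)$ to close into a triangle, i.e. $a$ and $c$ are always adjacent. Since $G$ is transitive on $2$-arcs, the number $\lambda$ of common neighbours of any two adjacent vertices is a constant, and the closing-up condition means that for every vertex $w$ and every pair of distinct neighbours of $w$, those neighbours are adjacent; hence $\Gamma(w)$ induces a clique on $4$ vertices. Combined with valency $4$, this forces $\Gamma(w)\cup\{w\}$ to be a $K_5$, and connectivity (Lemma~\ref{transB1} or just vertex-transitivity) yields $\Gamma\cong K_5$, giving case~\eqref{eq:TetraSmallGirth1}.

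\smallskip

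The bulk of the work is the girth $4$ case. Here I would analyse the bipartite-like local structure: a $4$-cycle through $v$ corresponds to two neighbours of $v$ sharing a second common neighbour, and $2$-arc-transitivity makes the number $c_2$ of common neighbours of any two vertices at distance $2$ a constant. I would compute the distance distribution around a fixed vertex and set up the standard double-counting identity relating valency $4$, the girth condition (no triangles, at least one $4$-cycle), and $c_2$. The possible values of $c_2\in\{1,2,4\}$ and the allowed local groups $G_v^{\Gamma(v)}$ partition the analysis: the extremal case where any two vertices at distance $2$ share all $4$ neighbours forces $\Gamma\cong K_{4,4}\cong C(4,1)$ (case~\eqref{eq:TetraSmallGirth2}); the remaining cases yield the three distance-regular graphs $K_{5,5}-5K_2$, $Q_4$, and $BCH$ listed in case~\eqref{eq:TetraSmallGirth3}. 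To identify these concretely I expect to invoke that each is distance-transitive (or at least distance-regular of diameter $\le 4$) with the computed intersection array, and then appeal to the known classification of such small distance-regular graphs, rather than reconstruct them by hand.

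\smallskip

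The main obstacle will be organising the girth $4$ case so that the case division on $(G_v^{\Gamma(v)}, c_2)$ is genuinely exhaustive and each branch lands on exactly one of the four named graphs without gaps or overlaps; in particular, ruling out spurious intersection arrays and confirming that the arithmetic constraints of a $4$-valent $2$-arc-transitive graph admit no further solutions. I anticipate leaning on an external classification result for $4$-valent $2$-arc-transitive graphs of small girth (of the kind available in the symmetric-graph census literature) to close this step cleanly, since a fully self-contained combinatorial enumeration would be lengthy.
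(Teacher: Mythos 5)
Your girth-$3$ argument is correct and is exactly the paper's: the existence of one triangle plus $2$-transitivity of $G_v^{\Gamma(v)}$ on the four neighbours forces every pair of neighbours of $v$ to be adjacent, whence $\Gamma\cong K_5$.

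The girth-$4$ part, however, has a genuine gap. You are right that $2$-arc-transitivity makes the number $c_2$ of common neighbours of two vertices at distance $2$ constant, but your case division is wrong and, more importantly, $c_2$ cannot finish the job. First, $c_2=1$ is impossible (girth $4$ forces some, hence every, pair at distance $2$ onto a $4$-cycle), and $c_2=3$ \emph{does} occur: in $K_{5,5}-5K_2$ two vertices in the same part have exactly $3$ common neighbours, so this graph falls outside your list $\{1,2,4\}$. Second, and this is the heart of the matter, $2$-arc-transitivity gives no control beyond distance $2$: it does not make $\Gamma$ distance-regular (let alone distance-transitive), so there is no intersection array to feed into a classification of small distance-regular graphs. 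Indeed $Q_4$ and $BCH$ have identical data out to distance $2$ (valency $4$, $c_2=2$, six vertices at distance $2$) yet have $16$ and $14$ vertices respectively; no counting around a fixed vertex can separate them, nor exclude a hypothetical further graph with $c_2=2$ — and the case $c_2=2$ is precisely where all the difficulty of the lemma sits. The only branch of your analysis that genuinely closes is $c_2=4$, which does force $\Gamma\cong K_{4,4}$ by an easy count. The paper closes the hard case by invoking the classification of $4$-valent \emph{edge-transitive} graphs of girth $4$ in \cite[Theorem~3.3]{PW.edgeTransGirth4}: either every vertex lies in exactly one $4$-cycle, or two distinct vertices share their neighbourhood, or $\Gamma$ is one of $K_{5,5}-5K_2$, $Q_4$, $BCH$. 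The first alternative is eliminated because $2$-transitivity of $G_v^{\Gamma(v)}$ produces two distinct $4$-cycles through $v$; the second leads, via \cite[Lemma~4.3]{PW.edgeTransGirth4}, to $\Gamma\cong C(r,1)$, and Lemma~\ref{AutPrXu} shows $C(r,1)$ is $2$-arc-transitive only for $r=4$, giving $K_{4,4}$. So the external result you would need to "lean on" is of this kind — a classification of edge-transitive girth-$4$ graphs — not a distance-regular classification; and once such a result is cited, the $c_2$ scaffolding becomes unnecessary.
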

\begin{proof}
Let $v$ be a vertex, let $\Gamma(v)=\{w_1,w_2,w_3,w_4\}$ be its neighbourhood and let $G:=\mathrm{Aut}(\Gamma)$.

	First, assume $g(\Gamma)=3$.  Without loss of generality, suppose $w_1$ and $w_2$ are adjacent. Since $G$ is $2$-arc-transitive, $G_v$ is $2$-transitive on $\Gamma(v)$. Hence $w_i$ is adjacent to $w_j$ for any $i\neq j$. Thus $\Gamma\cong K_5$ and part~\eqref{eq:TetraSmallGirth1} holds.

 Now, suppose $g(\Gamma)=4$. We need to recall the classification arising from \cite[Theorem~3.3]{PW.edgeTransGirth4}. If $\Delta$ is a $4$-valent edge-transitive graph, then one of the following holds
	\begin{enumerate}[(i)]
		\item\label{eq:alpha1} each vertex in $\Delta$ is contained in exactly one $4$-cycle,
		\item\label{eq:alpha2} there exist two distinct vertices $v_1,v_2$ with $\Delta(v_1)=\Delta(v_2)$,
		\item\label{eq:alpha3} $\Delta$ isomorphic to $K_{5,5}-5K_2$, $Q_4$ or $BCH$.
	\end{enumerate}
We consider these three possibilities for $\Gamma$ in turn.	Up to a permutation of the indices, there exists $u\in \Gamma(w_1)\cap\Gamma(w_2)$ such that $(v,w_1,u,w_2)$ is a $4$-cycle. Since $G_v^{\Gamma(v)}$ is $2$-transitive, there exists $g\in G_v$ with $(w_1,w_2)^g=(w_3,w_4)$. Therefore, $(v,w_1,u,w_2)^g=(v,w_3,u^g,w_4)$ is a $4$-cycle different from $(v,w_1,u,w_2)$. Thus part~\ref{eq:alpha1} is excluded. If $\Gamma$ satisfies~\ref{eq:alpha2}, then \cite[Lemma~4.3]{PW.edgeTransGirth4} gives that $\Gamma$ is isomorphic to $C(r,1)$ for some integer $r$. From Lemma~\ref{AutPrXu}, $C(r,1)$ is $2$-arc-transitive only when  $r=4$; therefore we obtain part~\eqref{eq:TetraSmallGirth2}. If $\Gamma$ satisfies part~\ref{eq:alpha3}, then we obtain the examples in part~\eqref{eq:TetraSmallGirth3}.
\end{proof}

\begin{de}\label{def:1}{\rm 
Let $\Gamma$ be a finite connected $4$-valent  graph and let $g$ be an automorphism of $\Gamma$. We partition $E\Gamma$ with respect to the action of $g$.
\begin{itemize}
\item We let $A(\Gamma,g)$ be the set of edges which are pointwise fixed by $g$, that is, $\{a,b\}\in A(\Gamma,g)$ if and only if $\{a,b\}\in E\Gamma$, $a^g=a$ and  $b^g=b$;
\item we let $F(\Gamma,g):=\mathrm{Fix}(E\Gamma,g)\setminus A(\Gamma,g)$, that is, $\{a,b\}\in F(\Gamma,g)$ if and only if $\{a,b\}\in E\Gamma$, $a^g=b$ and  $b^g=a$;
\item we let $N(\Gamma,g):=E\Gamma\setminus \mathrm{Fix}(E\Gamma,g)$.
\end{itemize}

We let $\Gamma[g]$ denote the subgraph of  $\Gamma$ induced by $\Gamma$ on the vertices which are incident with  edges in $A(\Gamma,g)$. The  edge-set of $\Gamma[g]$ is $A(\Gamma,g)$ and its vertices are  $1$-, $2$- or $4$-valent. Given $i\in \{1,2,4\}$, we let $V_i(\Gamma,g)$ denote the set of vertices of $\Gamma[g]$ having valency $i$. 
}
\end{de}

\begin{lem}\label{tetraNiceV}
	Let $\Gamma$ be a finite connected $4$-valent graph of girth $g(\Gamma)\geq 5$ and let $g$ be an automorphism of $\Gamma$. Then $2|F(\Gamma,g)|+4|V_1(\Gamma,g)|+3|V_2(\Gamma,g)|+|V_4(\Gamma,g)|\le |V\Gamma|$.
\end{lem}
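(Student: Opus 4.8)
The plan is to establish the inequality by a disjoint-counting (injective charging) argument: to each object counted on the left-hand side I would associate a set of vertices whose cardinality equals the coefficient of that object, and then show that all these sets are pairwise disjoint. Since the total cardinality is then exactly the left-hand side, the bound $\le |V\Gamma|$ follows. Concretely, the association is
\begin{itemize}
\item to an edge $\{a,b\}\in F(\Gamma,g)$, the pair $\{a,b\}$ of its two (non-fixed) endpoints;
\item to $v\in V_1(\Gamma,g)$, the set $\{v\}$ together with the three non-fixed neighbours of $v$;
\item to $v\in V_2(\Gamma,g)$, the set $\{v\}$ together with the two non-fixed neighbours of $v$;
\item to $v\in V_4(\Gamma,g)$, the singleton $\{v\}$.
\end{itemize}
Here I use that a fixed vertex has its neighbourhood permuted by $g$ and that a permutation of four points has $0$, $1$, $2$ or $4$ fixed points, so a vertex of $\Gamma[g]$ of valency $i$ has exactly $4-i$ non-fixed neighbours; thus each associated set has size $2$, $4$, $3$, $1$ respectively, matching the coefficients.

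The remaining task, and the heart of the proof, is disjointness, which is where $g(\Gamma)\ge 5$ is used. The centres $v$ arising from $V_1(\Gamma,g)$, $V_2(\Gamma,g)$, $V_4(\Gamma,g)$ are fixed and pairwise distinct (the $V_i(\Gamma,g)$ are disjoint by definition), whereas every other associated vertex is non-fixed, so a fixed centre never clashes with a non-fixed satellite. For the non-fixed vertices I would prove two facts. If a non-fixed vertex $u$ is adjacent to a fixed vertex $v$, then $u^g$ is a neighbour of $v$ distinct from $u$, and $\{u,u^g\}$ cannot be an edge, for otherwise $v,u,u^g$ would form a triangle; consequently such a $u$ is not an endpoint of any edge of $F(\Gamma,g)$, which separates the $F$-endpoints from the neighbours of fixed vertices. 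If moreover $u$ were adjacent to two distinct fixed vertices $v,v'$, then $u$ and $u^g$ would be two common neighbours of $v$ and $v'$, yielding a $4$-cycle $v,u,v',u^g$; hence each non-fixed satellite is attached to a unique centre.

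Finally, any edge of $F(\Gamma,g)$ incident with a vertex $u$ must equal $\{u,u^g\}$, so two distinct edges of $F(\Gamma,g)$ share no vertex. Putting these together, the associated sets are pairwise disjoint, and summing their cardinalities gives the claim. The main obstacle is precisely ruling out that a single non-fixed vertex is charged twice; both the triangle argument and the $4$-cycle argument are exactly the places where the hypothesis $g(\Gamma)\ge 5$ is indispensable, while the rest is routine bookkeeping.
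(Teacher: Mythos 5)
Your proof is correct and follows essentially the same counting argument as the paper: the paper also charges to each vertex of $V_1(\Gamma,g)$ and $V_2(\Gamma,g)$ its moved neighbours (packaged as an auxiliary bipartite graph $\Delta$ whose girth exceeds $4$) and adds the disjoint contributions of $V_4(\Gamma,g)$ and the endpoints of $F(\Gamma,g)$. If anything, your write-up is more explicit than the paper's at the two delicate points — the triangle argument separating moved neighbours of fixed vertices from endpoints of edges in $F(\Gamma,g)$, and the $4$-cycle argument (using that $u$ and $u^g$ would be two common neighbours) showing distinct fixed centres have disjoint satellite sets — both of which the paper subsumes in its unproved assertion that the relevant sets are pairwise disjoint.
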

\begin{proof}
We let 
\begin{align*}
\mathcal{F}&:=\{v\in V\Gamma\mid \{v,u\}\in F(\Gamma,g) \textrm{ for some }u\in V\Gamma\},\\
\mathcal{N}&:=\{v\in V\Gamma\mid \{v,u\}\in N(\Gamma,g)\textrm{ for some }u\in V\Gamma\}.
\end{align*}
Since $V_1(\Gamma,g),V_2(\Gamma,g),V_4(\Gamma,g),\mathcal{F},\mathcal{N}$ are pairwise disjoint and since $|\mathcal{F}|=2|F(\Gamma,g)|$, it suffices to show that
$|\mathcal{N}|\ge 3|V_1(\Gamma,g)|+2|V_2(\Gamma,g)|$.

We construct an auxiliary graph $\Delta$. The vertex set of $\Delta$ is $V_1(\Gamma,g)\cup V_2(\Gamma,g)\cup\mathcal{N}$ and we declare a vertex $v\in V_1(\Gamma,g)\cup V_2(\Gamma,g)$ adjacent to a vertex $u\in\mathcal{N}$ if $\{v,u\}\in E\Gamma$. By construction, $\Delta$ is bipartite with parts $V_1(\Gamma,g)\cup V_2(\Gamma,g)$ and $\mathcal{N}$.

Given $v\in V_1(\Gamma,g)$, the automorphism $g$ acts as a $3$-cycle on $\Gamma(v)$. Let $v_1,v_2,v_3\in \Gamma(v)$ forming the $3$-cycle of $g$. Then $\{v,v_1\},\{v,v_2\},\{v,v_3\}\in N(\Gamma,g)$ and hence $v_1,v_2,v_3\in\mathcal{N}$. This shows that 
each vertex in $V_1(\Gamma,g)$ has three neighbours in $\mathcal{N}$. Similarly, each vertex in $V_2(\Gamma,g)$ has two neighbours in $\mathcal{N}$. As $g(\Gamma)>4$, we have $g(\Delta)>4$ and hence $3|V_1(\Gamma,g)|+2|V_2(\Gamma,g)|\le |\mathcal{N}|$, because $\Delta(v)\cap \Delta(v')=\emptyset$ for any two distinct vertices $v,v'\in V_1(\Gamma,g)\cup V_2(\Gamma,g)$.
\end{proof}

 An \textbf{amalgam} is a triplet $(L,B,R)$ of groups such that $B=L\cap R$, and its \textbf{index} is the couple $(|L:B|,|R:B|)$.
The amalgam $(L,B,R)$ is said to be \textbf{faithful} if no subgroup of $B$ is normal in $\langle L,R \rangle$; moreover, $(L,B,R)$ is said to be \textbf{$2$-transitive} if the action of $L$ on the right cosets of $B$ by right multiplication is $2$-transitive.

Observe that, if $\Gamma$ is a finite connected $G$-arc-transitive graph of valency $k$, then for any $v\in V\Gamma$ and $w\in \Gamma(v)$, the triplet
\begin{equation*}
	(G_v,G_{vw},G_{\{v,w\}})
\end{equation*}
is a faithful amalgam of index $(k,2)$. 

Finite faithful $2$-transitive amalgams of index $(4,2)$ have been studied in detail by Poto\v{c}nik in \cite{P.amalgams4.2}. We use this work to deduce some properties on $\mathrm{Fix}(E\Gamma,g)$.

\begin{lem}\label{tetraArc2}
	Let $\Gamma$ be a finite connected $4$-valent graph, let $G$ be an $s$-arc-transitive group of automorphisms of $\Gamma$ with $s\geq 2$ and let $g\in G$ fixing pointwise the $s$-arc $(v_0,\ldots,v_{s-1})$. If  $G$ is not $(s+1)$-arc-transitive and $g$ fixes pointwise $\Gamma(v_0)\cup\Gamma(v_{s-1})$, then $g=1$.
\end{lem}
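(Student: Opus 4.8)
The plan is to exploit the $s$-arc-transitivity together with the non-$(s+1)$-arc-transitivity to pin down the local structure of the stabilizers, and then show that $g$ must act trivially on an ever-growing connected region, forcing $g=1$ by connectedness. Concretely, set $B:=G_{v_0}$ and recall that the amalgam $(G_{v_0},G_{v_0v_1},G_{\{v_0,v_1\}})$ is a faithful $2$-transitive amalgam of index $(4,2)$, so we may invoke Poto\v{c}nik's classification in~\cite{P.amalgams4.2}. The hypothesis that $G$ is $s$-arc- but not $(s+1)$-arc-transitive restricts the possible amalgams severely, and in particular controls the orders of the vertex stabilizer $G_{v_0}$ and of the pointwise stabilizers $G_{v_0}^{[1]}$ of the neighbourhood. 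The first step is therefore to read off from that classification exactly which local actions can occur for a given $s$, and to record the resulting bounds on $|G_{v_0}^{[1]}|$.

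Next I would analyse the element $g$ itself. By hypothesis $g$ fixes the $s$-arc $(v_0,\dots,v_{s-1})$ pointwise and fixes $\Gamma(v_0)\cup\Gamma(v_{s-1})$ pointwise. The key local observation is that $g\in G_{v_0}^{[1]}\cap G_{v_{s-1}}^{[1]}$: it lies in the kernels of the actions on both terminal neighbourhoods. The idea is to propagate this triviality along arcs. Since $g$ fixes $\Gamma(v_{s-1})$ pointwise, for every neighbour $w$ of $v_{s-1}$ the element $g$ fixes the $s$-arc $(v_1,\dots,v_{s-1},w)$, and one wants to conclude that $g$ also fixes $\Gamma(w)$ pointwise. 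This is exactly where the failure of $(s+1)$-arc-transitivity enters: if $g$ acted nontrivially on $\Gamma(w)$, one could use the known structure of the local amalgam to manufacture an element witnessing $(s+1)$-arc-transitivity, a contradiction. Thus the propagation step should reduce to a purely local group-theoretic computation inside the two adjacent vertex stabilizers, using the intersection $G_{v_{s-1}}^{[1]}\cap G_w^{[1]}$ and the classification data.

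The main obstacle I expect is precisely this propagation lemma: showing that fixing one more neighbourhood is forced rather than merely permitted. The subtlety is that $G_{v_0}^{[1]}$ need not be trivial in general (for instance in higher $s$ the kernels can be nontrivial $2$-groups), so one cannot simply assert that an element fixing a long arc is the identity; one genuinely needs the non-$(s+1)$-arc-transitivity to close the gap at each step. I would handle this by a careful case analysis over the finitely many amalgams surviving the constraint, verifying in each that an automorphism fixing an $s$-arc together with the neighbourhoods of its two ends cannot act nontrivially on the neighbourhood of an adjacent vertex without upgrading the transitivity degree. Once that single-step claim is established, an induction along paths—using that $\Gamma$ is connected and the inductive hypothesis supplies an $s$-arc through each newly reached vertex with its neighbourhood already fixed—shows that $g$ fixes every vertex of $\Gamma$, whence $g=1$. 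The clean way to organise the induction is to argue that the set of vertices whose neighbourhood is fixed pointwise by $g$ is nonempty, closed under adjacency, and therefore all of $V\Gamma$ by Lemma~\ref{transB1} or directly by connectedness.
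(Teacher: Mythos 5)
Your proposal rests on the same crux as the paper's proof---Poto\v{c}nik's classification of finite faithful $2$-transitive amalgams of index $(4,2)$ together with a case-by-case verification---but it is organised quite differently, and more laboriously. The paper's argument is two lines: if $G$ is $s$-arc-regular, then $g=1$ immediately, simply because $g$ fixes an $s$-arc; this disposes of all but $6$ amalgams in the classification, and for those $6$ a direct computation inside each amalgam shows that the pointwise stabilizer of an $s$-arc together with the neighbourhoods of its two ends is already trivial. Once that local computation is done there is nothing left to propagate: $g=1$ falls out globally, with no induction over the graph. Your route needs the same kind of amalgam-by-amalgam computation (to establish your propagation lemma) and then, on top of it, the connectedness induction; you also never invoke the $s$-arc-regularity shortcut, so your case analysis would have to run over the whole classification rather than only over the $6$ exceptional amalgams.

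Two points in your sketch need repair. First, your ``clean'' formulation of the induction is not self-propagating: knowing that $g$ fixes $\Gamma(v)$ pointwise for a single vertex $v$ gives you no handle on the neighbours of $v$, because your propagation step consumes an entire $s$-arc, pointwise fixed, with \emph{both} end-neighbourhoods pointwise fixed. The invariant must therefore be carried on $s$-arcs under shifts (delete a vertex at one end, append one at the other), applying the propagation claim in both directions so that after a shift both ends of the new arc again have fixed neighbourhoods; only then does connectedness, together with the fact that the valency is at least $3$, let you sweep the arc's head over every vertex of $\Gamma$. Second, the remark that a nontrivial action of $g$ on $\Gamma(w)$ would ``manufacture $(s+1)$-arc-transitivity'' is automatic only when $g$ induces a $3$-cycle on the three extensions of the arc at $w$, for then the arc-stabilizer is transitive on extensions. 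If $g$ induces a transposition, no upgrade of transitivity follows, and that is exactly the case where the amalgam data must be consulted. So your propagation lemma is not lighter than, but essentially coincides with, the computation the paper performs directly---and the additional induction layer then buys you nothing.
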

\begin{proof}
	If $G$ is $s$-arc-regular, then $g=1$ because $g$ fixes an $s$-arc. Using~\cite{P.amalgams4.2}, we see that there are $6$ amalgams such that $G$ is not $s$-arc-regular. For each of these remaining amalgams a case-by-case computation shows that the only automorphism leaving the neighbourhood of each end of a given $s$-arc fixed is the identical map.
\end{proof}

\begin{lem}\label{tetraNiceV2}
Let $\Gamma$ be a finite connected $4$-valent graph of girth $g(\Gamma)\geq 5$, let $G$ be a $2$-arc-transitive group of automorphisms of $\Gamma$ such that $G_v^{[1]}\cup G_w^{[2]}$ is a $3$-group, for any two distinct vertices at distance at most $2$, and let $g\in G\setminus\{1\}$. Then $3|V_4(\Gamma,g)|\le 3|V_1(\Gamma,g)|+|V_2(\Gamma,g)|$.
\end{lem}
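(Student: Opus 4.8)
The plan is to first reduce to the case in which $g$ is a $3$-element, and then to turn the inequality into a counting statement about the fixed subgraph $\Gamma[g]$. I would begin by disposing of the trivial case $V_4(\Gamma,g)=\emptyset$, where the inequality holds vacuously. So suppose $V_4(\Gamma,g)\ne\emptyset$ and fix $v_0\in V_4(\Gamma,g)$. By Definition~\ref{def:1}, $g$ fixes $v_0$ and every vertex of $\Gamma(v_0)$, so $g\in G_{v_0}^{[1]}$; since $G_{v_0}^{[1]}$ is a $3$-group by hypothesis, $g$ is a non-identity element of $3$-power order. Now at every vertex $u$ fixed by $g$ the induced permutation $g^{\Gamma(u)}$ lies in $G_u^{\Gamma(u)}$, which is $2$-transitive of degree $4$ (because $G$ is $2$-arc-transitive) and hence embeds in $S_4$. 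As $g^{\Gamma(u)}$ has $3$-power order, it is either trivial or a $3$-cycle, so it fixes $4$ or exactly $1$ of the neighbours of $u$. Consequently every fixed vertex lies in $V_1(\Gamma,g)\cup V_4(\Gamma,g)$, we have $V_2(\Gamma,g)=\emptyset$, and the claimed inequality reduces to $|V_4(\Gamma,g)|\le |V_1(\Gamma,g)|$.

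Next I would record the local picture. If $v\in V_4(\Gamma,g)$ and $w\in\Gamma(v)$, then $w$ is fixed and $v$ is a fixed neighbour of $w$; by the previous paragraph $w$ is either in $V_4(\Gamma,g)$, or in $V_1(\Gamma,g)$ with $v$ as its unique fixed neighbour. In particular each vertex of $V_1(\Gamma,g)$ is adjacent in $\Gamma[g]$ to at most one vertex of $V_4(\Gamma,g)$. Writing $H$ for the subgraph of $\Gamma[g]$ induced on $V_4(\Gamma,g)$, and $e_{44},e_{41},e_{11}$ for the numbers of edges of $\Gamma[g]$ lying inside $V_4(\Gamma,g)$, between $V_4(\Gamma,g)$ and $V_1(\Gamma,g)$, and inside $V_1(\Gamma,g)$ respectively, the degree identities $4|V_4(\Gamma,g)|=2e_{44}+e_{41}$ and $|V_1(\Gamma,g)|=e_{41}+2e_{11}$ yield
\[
|V_1(\Gamma,g)|=4|V_4(\Gamma,g)|-2e_{44}+2e_{11}\ge 4|V_4(\Gamma,g)|-2e_{44}.
\]
Thus it suffices to prove $2e_{44}\le 3|V_4(\Gamma,g)|$, i.e.\ that every vertex of $H$ has degree at most $3$.

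The crux is therefore to show that no $v\in V_4(\Gamma,g)$ has all four of its neighbours in $V_4(\Gamma,g)$. Suppose such a $v$ existed and choose two distinct neighbours $a,w\in\Gamma(v)$; then $a,w\in V_4(\Gamma,g)$, so $g$ fixes pointwise the $2$-arc $(a,v,w)$ together with $\Gamma(a)\cup\Gamma(w)$. Here I would invoke Lemma~\ref{tetraArc2}: the hypothesis that the kernels $G_v^{[1]}$ and $G_w^{[2]}$ are $3$-groups pins down, through the classification of faithful $2$-transitive amalgams of index $(4,2)$ in~\cite{P.amalgams4.2}, an amalgam for which $G$ is not $3$-arc-transitive, so that Lemma~\ref{tetraArc2} applies and forces $g=1$, a contradiction. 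Hence every vertex of $H$ has degree at most $3$, whence $2e_{44}\le 3|V_4(\Gamma,g)|$, and the displayed inequality gives $|V_1(\Gamma,g)|\ge |V_4(\Gamma,g)|$, as required.

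I expect the genuine obstacle to be precisely this last step. The counting and the reduction to $|V_4(\Gamma,g)|\le|V_1(\Gamma,g)|$ are routine once $V_2(\Gamma,g)=\emptyset$ is established, but the rigidity statement ``$g$ cannot fix a $2$-arc and the neighbourhoods of both its ends'' is not available from connectivity alone: connectivity only shows that $g$ cannot fix every ball $B_k(v)$, not that it fails to fix a single such ball of radius two. One must therefore be certain that the $3$-group condition on $G_v^{[1]}$ and $G_w^{[2]}$ genuinely confines $G$ to the amalgams covered by Lemma~\ref{tetraArc2} (equivalently, that $G$ is not $3$-arc-transitive), which is where the input from~\cite{P.amalgams4.2} is indispensable.
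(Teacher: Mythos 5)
Your reduction to $V_2(\Gamma,g)=\emptyset$ and the edge-counting that turns the inequality into ``every vertex of the subgraph induced on $V_4(\Gamma,g)$ has degree at most $3$'' are fine, but the crux step -- which you correctly identified as the crux -- is wrong. You apply Lemma~\ref{tetraArc2} with $s=2$, which requires $G$ to be $2$- but \emph{not} $3$-arc-transitive, and you assert that the $3$-group hypothesis forces this via the classification in~\cite{P.amalgams4.2}. That assertion is false. The amalgam $\left(C_3\times A_4,\, C_3\times C_3,\, (C_3\times C_3)\rtimes C_2\right)$, in which $G_v^{[1]}\cong C_3$ is the first factor, is finite, faithful and $2$-transitive of index $(4,2)$, satisfies your hypothesis in its strongest reading (each $G_v^{[1]}$ is a $3$-group), and is $3$-arc-\emph{regular}, hence $3$-arc-transitive; it is realized by finite $4$-valent graphs of arbitrarily large girth. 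Worse, note that the hypothesis as written (``$G_v^{[1]}\cup G_w^{[2]}$'') is a misprint for ``$G_v^{[1]}\cap G_w^{[1]}$ is a $3$-group'': this is what the paper's proof uses and the only condition verified when the lemma is invoked in the proof of Theorem~\ref{thrm:main1}. Under that intended hypothesis your opening step (deducing that $g$ has $3$-power order from a \emph{single} vertex of $V_4(\Gamma,g)$) is also unavailable, and your intermediate claim is not merely unjustified but false: the incidence graph of the generalized hexagon of order $(3,3)$, with its $7$-arc-transitive group, falls under the intended hypothesis, and a central elation $g$ of order $3$ fixes the ball of radius $2$ around its centre $v$ pointwise, so $v$ lies in $V_4(\Gamma,g)$ together with all four of its neighbours. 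No bound on the degrees of the subgraph induced on $V_4(\Gamma,g)$ can be extracted from radius-$2$ rigidity.

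The paper's proof is built precisely to dodge this. It first splits on whether two distinct vertices of $V_4(\Gamma,g)$ lie at distance at most $2$. If not, the neighbourhoods $\Gamma(v)$, $v\in V_4(\Gamma,g)$, are pairwise disjoint and contained in $V_1(\Gamma,g)\cup V_2(\Gamma,g)$, giving $4|V_4(\Gamma,g)|\le |V_1(\Gamma,g)|+|V_2(\Gamma,g)|$ by pure counting, with no group theory at all. If yes, then $g\in G_v^{[1]}\cap G_w^{[1]}$ has $3$-power order (this is exactly where the intersection hypothesis is used), so $V_2(\Gamma,g)=\emptyset$; it then takes $s$ \emph{maximal} with $G$ $s$-arc-transitive, so that Lemma~\ref{tetraArc2} is legitimately applicable, and argues globally rather than locally: if $\Gamma[g]$ contains a cycle, all its vertices lie in $V_4(\Gamma,g)$ (since $V_2(\Gamma,g)=\emptyset$) and its length is at least $g(\Gamma)\ge s+1$ by Lemma~\ref{arcTransGirth}, so it contains a pointwise-fixed $s$-arc both of whose ends have pointwise-fixed neighbourhoods, whence $g=1$, a contradiction; therefore $\Gamma[g]$ is a forest, and Euler's formula together with the handshake identity gives $2|V_4(\Gamma,g)|=|V_1(\Gamma,g)|-2c<|V_1(\Gamma,g)|$. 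The structural lesson you missed is that the rigidity input must be applied to an $s$-arc for the maximal $s$, extracted from a hypothetical cycle of fixed vertices, because for highly arc-transitive amalgams a non-identity automorphism genuinely can fix a ball of radius $2$.
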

\begin{proof}
Assume that the vertices in $V_4(\Gamma,g)$ are at pairwise distance more than $2$. Then any two such vertices share no common neighbour. In particular, $\bigcup_{v\in V_4(\Gamma,g)} \Gamma(v)$ has cardinality $4|V_4(\Gamma,g)|$ and is contained in $V_1(\Gamma,g)\cup V_2(\Gamma,g)$. Therefore, $4|V_4(\Gamma,g)|\leq |V_1(\Gamma,g)|+|V_2(\Gamma,g)|$ and the lemma immediately follows in this case.

Assume that there exist two distinct vertices $v$ and $w$ of $V_4(\Gamma,g)$ having distance at most $2$. In particular, $g\in G_v^{[1]}\cap G_w^{[1]}$ and hence $g$ has order a power of $3$, because $G_v^{[1]}\cap G_w^{[1]}$ is a $3$-group. Observe that $V_2(\Gamma,g)=\emptyset$ because an element of order $3$ in a local group  cannot fix exactly two elements.  Let $s\ge 2$ such that $G$ is $s$-arc-transitive, but not $(s+1)$-arc-transitive.

Suppose  $\Gamma[g]$ is not a forest. Then $\Gamma[g]$ contains an $\ell$-cycle $C$. As $V_2(\Gamma,g)=\emptyset$, the vertices of $C$ are elements of $V_4(\Gamma,g)$. From Lemma~\ref{arcTransGirth}, we have $g(\Gamma[g])\ge g(\Gamma)\ge s+1$ and hence, from $C$, we can extract an $s$-arc  whose ends lie in $V_4(\Gamma,g)$, contradicting Lemma~\ref{tetraArc2}. 

Suppose $\Gamma[g]$ is a forest. Let $c$ be the number of connected components of $\Gamma[g]$. From Euler's formula, we have $|V\Gamma[g]|-|E\Gamma[g]|=c$. Clearly, $|V\Gamma[g]|=|V_1(\Gamma,g)|+|V_4(\Gamma,g)|$. Let $\mathcal{S}:=\{(v,w)\in V\Gamma[g]\times V\Gamma[g]\mid \{v,w\}\in E\Gamma[g]\}$. Then
$$2|E\Gamma[g]|=|\mathcal{S}|=\sum_{v\in V\Gamma[g]}|\Gamma[g](v)|=
\sum_{v\in V_1(\Gamma,g)}|\Gamma[g](v)|+\sum_{v\in V_4(\Gamma,g)}|\Gamma[g](v)|
=
|V_1(\Gamma,g)|+4|V_4(\Gamma,g)|.$$
It follows $2|V_3(\Gamma,g)|=|V_1(\Gamma,g)|-2c<|V_1(\Gamma,g)|$.
\end{proof}

\begin{proof}[Proof of Theorem~$\ref{thrm:main1}$ when $\Gamma$ is $2$-arc-transitive] Let $\Gamma$ be a finite connected $4$-valent $2$-arc-transitive graph admitting a non-identity automorphism $g$ with $\mathrm{fpr}(E\Gamma,g)>1/3$ and let $G:=\mathrm{Aut}(\Gamma)$.

If $g(\Gamma)\le 4$, then the proof follows from Lemma~\ref{TetraSmallGirth} and from the remarks at the beginning of Section~\ref{secTetra2AT}. Therefore, for the rest of the proof we suppose that $g(\Gamma)>4$. Since $4|V\Gamma|=2|E\Gamma|$, we have
	\begin{align}\label{align}
\fpr(E\Gamma,g) &= \frac{|F(\Gamma,g)|+|A(\Gamma,g)|}{|E\Gamma|} = \frac{2|F(\Gamma,g)|+2|A(\Gamma,g)|}{4|V\Gamma|}\\
& \leq  \frac{2|F(\Gamma,g)| +|V_1(\Gamma,g)| +2|V_2(\Gamma,g)| +4|V_4(\Gamma,g)|}{8|F(\Gamma,g)|+ 16|V_1(\Gamma,g)|+ 12|V_2(\Gamma,g)|+ 4|V_4(\Gamma,g)|},\nonumber
	\end{align}
where in the last inequality we have used Lemma~\ref{tetraNiceV}.

We claim that, for any two distinct vertices $v,w\in V\Gamma$  at distance at most $2$  one of the following holds
	\begin{enumerate}[(i)]
		\item\label{a1} $G_v^{[1]}\cap G_w^{[1]}$ is a $3$-group; 
		\item\label{a2} the couple $(\Gamma,G)$ defines the amalgam
		\begin{equation*}
			\left( S_3\times S_4, S_3\times S_3, (S_3\times S_3)\rtimes C_2 \right),
		\end{equation*}
moreover, if $d(v,w)=1$, then $G_v^{[1]}\cap G_w^{[1]}=1$ and, if $d(v,w)=2$, then $G_v^{[1]}\cap G_w^{[1]}$ is isomorphic to $C_2$.
	\end{enumerate}
The claim follows with a case-by-case  computation on the finite faithful $2$-transitive amalgams of index $(4,2)$ classified in~\cite{P.amalgams4.2}. We now divide the proof according to~\ref{a1} and~\ref{a2}.	

Suppose that~\ref{a1} holds. From Lemma~\ref{tetraNiceV2}, we have $3|V_4(\Gamma,g)|\le 3|V_1(\Gamma,g)|+|V_2(\Gamma,g)|$. Using this inequality and~\eqref{align}, we obtain $\mathrm{fpr}(E\Gamma,g)\le 1/4<1/3$, which is a contradiction.

Suppose that~\ref{a2} holds. If there exist two distinct vertices $v$ and $w$ in $V_4(\Gamma,g)$ with $d(v,w)=1$, then $g\in G_v^{[1]}\cap G_w^{[1]}=1$, which is a contradiction. Assume there exist two distinct vertices $v$ and $w$ in $V_4(\Gamma,g)$ with $d(v,w)=2$. Then $g\in G_v^{[1]}\cap G_w^{[1]}\cong C_2$ and hence $g$ has order $2$. This implies $V_1(\Gamma,g)=\emptyset$ because an involution in a local group cannot fix only one element. Since the subgraph induced by $\Gamma[g]$ on $V_4(\Gamma,g)$ has no edges and since each vertex in $V_4(\Gamma,g)$ has valency $4$, we deduce $4|V_4(\Gamma,g)|\le |E\Gamma[g]|=|V_2(\Gamma,g)|+2|V_4(\Gamma,g)|$. Using this inequality and~\eqref{align}, we obtain $\mathrm{fpr}(E\Gamma,g)<1/3$, which is a contradiction.
	
Finally, assume that the vertices in $V_4(\Gamma,g)$ are at pairwise distance more than $2$. Then any two such vertices share no common neighbour. In particular, $\bigcup_{v\in V_4(\Gamma,g)} \Gamma(v)$ has cardinality $4|V_4(\Gamma,g)|$ and is contained in $V_1(\Gamma,g)\cup V_2(\Gamma,g)$. Therefore, $4|V_4(\Gamma,g)|\leq |V_1(\Gamma,g)|+|V_2(\Gamma,g)|$.  Using this inequality and~\eqref{align}, we obtain $\mathrm{fpr}(E\Gamma,g)\le 1/4<1/3$, which is a contradiction.
\end{proof}

\subsection{Proof of Theorem~$\ref{thrm:main1}$ when $\Gamma$  is not $2$-arc-transitive}\label{tetranot2}
To conclude the proof of Theorem~\ref{thrm:main1}, we argue by induction on $|V\Gamma|$.

Let $\Gamma$ be a finite connected vertex- and edge-transitive $4$-valent graph admitting a non-identity automorphism $g$ fixing more than $1/3$ of the edges and with $G:=\mathrm{Aut}(\Gamma)$ not $2$-arc-transitive. If $\Gamma$ is isomorphic to a Praeger-Xu graph, then part~\eqref{thrm:main2eq} of Theorem~\ref{thrm:main1} holds. Therefore, for the rest of the argument, we suppose that $\Gamma$ is not isomorphic to $C(r,s)$, for any choice of $r$ and $s$ with $r\ge 3$ and $1\le s\le r-1$.

Let $v\in V\Gamma$. Since $G$ is not $2$-arc-transitive, $G_v^{\Gamma(v)}$ is not $2$-transitive on $\Gamma(v)$. Since $G$ is vertex- and edge-transitive, we obtain that either  $G_v^{\Gamma(v)}$ is transitive or $G_v^{\Gamma(v)}$ has two orbits of cardinality $2$. In both cases, we deduce that $G_v^{\Gamma(v)}$ is a $2$-group. As $\Gamma$ is connected, it follows that $G_v$ is a $2$-group.

If $G$ has no non-identity normal subgroups having cardinality a power of $2$, Theorem~\ref{O2G=1} (applied to the faithful and transitive action of $G$ on $E\Gamma$) contradicts $\mathrm{fpr}(E\Gamma,g)>1/3$. Thus, $G$ has a minimal normal $2$-subgroup $N$. 

As $\Gamma$ is not isomorphic to a Praeger-Xu graph, Lemma~\ref{charPrXu} yields that $N$ acts semi-regularly on $V\Gamma$. Consider the quotient graph $\Gamma/N$ and observe that, as $G$ is vertex- and edge-transitive, $\Gamma/N$ has valency $0$, $1$, $2$ or $4$.

If $\Gamma/N$ has valency 0, then $N$ is transitive on $V\Gamma$. Thus $N$ is vertex-regular on $\Gamma$. As $\Gamma$ is connected of valency $4$, $N$ is generated by at most $4$ elements and hence $|V\Gamma|=|N|$ divides $2^4$.  
If $\Gamma/N$ has valency $1$, then $N$ has two orbits on $V\Gamma$. Moreover,~\cite[Lemma~1.14]{PS.fixedVertex} implies that $|V\Gamma|=2|N|$ divides $128$. In both cases, the statement can be checked computationally by inspecting the candidate graphs from the census of all $4$-valent vertex- and edge-transitive graphs of small order, see \cite{PSV.cubicCensus,PSV.tetraHalfCensus}. 
If $\Gamma/N$ has valency $2$, then we contradict Lemma~\ref{quotientCycle}. Therefore, for the rest of the proof, we may suppose that $\Gamma/N$ has valency $4$.

Observe that $G/N$ acts faithfully as a group of automorphisms on $\Gamma/N$. Moreover, $G/N$ acts vertex- and edge-transitively on $\Gamma/N$, but not $2$-arc-transitively. Observe that $g\notin N$, because the elements in $N$ fix no edge of $\Gamma$. Thus $gN$ is not the identity automorphism of $\Gamma/N$ and, by Lemma~\ref{fprQuotient}, we have $\fpr(E\Gamma/N,Ng)>1/3$. Our inductive hypothesis on $|V\Gamma|$ implies that $\Gamma/N$  is isomorphic to $K_5$ or to  a Praeger-Xu graph $C(r,s)$ with $3s < 2r-3$.

Assume $\Gamma/N\cong K_5$. Now, $\mathrm{Aut}(K_5)=S_5$ and $S_5$ contains a unique conjugacy class of subgroups which are vertex- and edge-transitive, but not $2$-transitive (namely, the Frobenius groups of order $20$). Therefore, $G/N$ is isomorphic to a Frobenius group of order $20$. In particular, as $N$ is an irreducible module for a Frobenius group of order $20$, we get $|N|\le 16$.  We deduce $|V\Gamma|\le 10\cdot 16=160$ and, as above,  the statement can be checked computationally by inspecting the census of all $4$-valent vertex- and edge-transitive graphs of small order.

Assume $\Gamma/N\cong C(r,s)$, for some $r$ and $s$ with $3s<2r-3$. From Lemma~\ref{GconjugateH}, $G/N$ is $\Aut(\Gamma)$-conjugate to a subgroup of $H$ as defined in Section~\ref{secPrXu}. Without loss of generality, we can identify $G/N$ with such subgroup, so that $G/N\leq H$. Now, we first deal with the exceptional case $(r,s)=(4,1)$. As $G/N$ is a $2$-group and $N$ is a minimal normal subgroup of $G$, we deduce $|N|=2$ and hence $|V\Gamma|=|V\Gamma/N||N|=4\cdot 2=8$. Now, the proof follows inspecting the  vertex- and edge-transitive graphs of  order $8$. Therefore, for the rest of the argument, we suppose $(r,s)\ne (4,1)$.  Now, Lemma \ref{gInK} implies $Ng\in K\leq H^+$. Denote by $X$ the group $G/N\cap H^+$. This group is an half-arc-transitive group of automorphisms of $\Gamma/N$ and, since $|H:H^+|=2$, we have $|G/N:X|\leq2$. Denote by $G^+$ the preimage of $X$ with respect to the quotient projection $G\rightarrow G/N$, so that $G^+/N\cong X$. Now, $G^+$ acts half-arc-transitively on $\Gamma$ and, from $Ng\in X$, we see that $g\in G^+$. In particular, replacing $G$ with $G^+$ if necessary, in the rest of our argument we may suppose that  $G=G^+$, that is, $G/N\leq H^+$.

By Lemma~\ref{gInK}, all the edges fixed in $\Gamma/N$ by $gN$ are fixed as arcs. Therefore, all the edges fixed in $\Gamma$ by $g$ are fixed as arcs.

Considering the graph induced by $\Gamma$ on $\mathrm{Fix}(V\Gamma,g)$, we deduce $
		2|\mathrm{Fix}(E\Gamma,g)| \leq 4|\mathrm{Fix}(V\Gamma,g)|$. In particular, if $|\mathrm{Fix}(V\Gamma,g)|\leq |V\Gamma|/3$, then
	\begin{equation*}
\frac{1}{3}<\fpr(E\Gamma, g) = \frac{|\mathrm{Fix}(E\Gamma,g)|}{|E\Gamma|} \leq \frac{2|\mathrm{Fix}(V\Gamma,g)|}{|E\Gamma|} \leq \frac{2|V\Gamma|}{3|E\Gamma|} = \frac{|E\Gamma|}{3|E\Gamma|} = \frac{1}{3},
	\end{equation*}
which is a contradiction. Therefore $\fpr(V\Gamma, g)>1/3$. Now, the hypothesis of Lemma~2.3 in \cite{PS.fixedVertex} are satisfied. Therefore,~\cite[Lemma~2.3]{PS.fixedVertex} implies that $\Gamma$ is a Praeger-Xu graph, which is our final contradiction.

\section{Proof of Theorem~\ref{thrm:main2}}\label{secBuddy}
We now turn our attention to finite connected $3$-valent vertex-transitive graphs. We divide the proof of Theorem~\ref{thrm:main2} in three cases, which we now describe. Let $\Gamma$ be a finite connected $3$-valent vertex-transitive graph, let $G:=\mathrm{Aut}(\Gamma)$ and let $v\in V\Gamma$. The local group $G_v^{\Gamma(v)}$ is a subgroup of the symmetric group of degree $3$ and we divide the proof of Theorem~\ref{thrm:main2} depending on the structure of $G_v^{\Gamma(v)}$. When $G_v^{\Gamma(v)}=1$, the connectivity of $\Gamma$ implies $G_v=1$ and hence $G$ acts regularly on $V\Gamma$. In this case an observation of Sabidussi~\cite{Sabidussi} yields that $\Gamma$ is Cayley graph over $G$. We deal with this case in Section~\ref{secBuddy1}. When $G_v^{\Gamma(v)}$ is cyclic of order $2$,~\cite{PSV.cubicCensus} has established a fundamental relation between $\Gamma$ and a certain finite connected $4$-valent graph;  in Section~\ref{secBuddy2}, we exploit this relation and Theorem~\ref{thrm:main1} to deal with this case. When $G_v^{\Gamma(v)}$ is transitive, $\Gamma$ is arc-transitive and we use the amazing result of Tutte concerning the structure of $G_v$ to deal with this case in Section~\ref{secBuddy3}.

\subsection{Proof of Theorem~$\ref{thrm:main2}$ when the local group is the identity}\label{secBuddy1}
Let $\Gamma$ be a finite connected $3$-valent vertex-transitive graph, let $v\in V\Gamma$, let $G:=\mathrm{Aut}(\Gamma)$  and let $g\in G\setminus\{1\}$. Assume that $G_v^{\Gamma(v)}=1$. Lemma~\ref{Cayley} yields $\mathrm{fpr}(E\Gamma,g)\le 1/3$ and hence Theorem~\ref{thrm:main2} holds in this case.

\subsection{Proof of Theorem~$\ref{thrm:main2}$ when the local group is cyclic of order 2}\label{secBuddy2}
In our proof of this case, we need to refer to two families of $3$-valent Cayley graphs. Given $n\in\mathbb{N}$ with $n\ge 3$, the \textbf{prim} $\mathrm{Pr}_n$ is the Cayley graph
\begin{equation*}
\mathrm{Pr}_n = \mathrm{Cay}\left( \mathbb{Z}_n\times \mathbb{Z}_2, \{(0,1),(1,0),(-1,0)\} \right).
\end{equation*}
Similarly, given $n\in\mathbb{N}$ with $n\ge 2$, the \textbf{M\"{o}bius ladder} $\mathrm{Mb}_n$ is the Cayley graph
\begin{equation*}
\mathrm{Mb}_n = \mathrm{Cay}\left( \mathbb{Z}_{2n}, \{1,n,-1\} \right).
\end{equation*}
For these two classes of graphs the proof of Theorem~\ref{thrm:main2} follows with a computation. When $n\ne 4$, the automorphism group of $\mathrm{Pr}_n$ is isomorphic to $D_n\times C_2$ and, for each $x\in \mathrm{Aut}(\mathrm{Pr}_n)$ with $x\ne 1$, it can be verified that $\mathrm{fpr}(E\mathrm{Pr}_n,x)\le 1/3$, see also Lemma~\ref{Cayley}. The case $n=4$ is exceptional, because $\mathrm{Pr}_4\cong Q_4$ is $2$-arc-transitive and hence $\mathrm{Pr}_4$ is of no concern to us here. Similarly, when $n\notin \{2,3\}$, the automorphism group of $\mathrm{Mb}_n$ is isomorphic to $D_{2n}$ and, for each $x\in \mathrm{Aut}(\mathrm{Mb}_n)$ with $x\ne 1$, it can be verified that $\mathrm{fpr}(E\mathrm{Mb},x)\le 1/3$, again see also Lemma~\ref{Cayley}. The cases $n\in \{2,3\}$ are exceptional, because $\mathrm{Mb}_2\cong K_4$ and $\mathrm{Mb}_3$ are $2$-arc-transitive and hence are of no concern to us here.

Now, let $\Gamma$ be a finite connected $3$-valent vertex-transitive graph not isomorphic to $\mathrm{Pr}_n$ and not isomorphic to $\mathrm{Mb}_n$, let $v\in V\Gamma$, let $G:=\mathrm{Aut}(\Gamma)$ and let $g\in G\setminus\{1\}$ with $\mathrm{fpr}(E\Gamma,g)>1/3$. Assume that $G_v^{\Gamma(v)}$ is cyclic of order $2$.

For a vertex $w \in V\Gamma$, let $w'$ be the neighbour of $w$ such that $\{w\}$ is the orbit of $G_w$ of length $1$. Then
clearly $(w')' = w$ and $G_w = G_{w'}$. Hence, the set $\mathcal{M} := \{\{w, w' \} \mid w \in V\Gamma\}$ is a complete matching of $\Gamma$, while
edges outside $\mathcal{M}$ form a 2-factor $\mathcal{F}$. The group $G$ in its action on $E\Gamma$ fixes setwise both $\mathcal{F}$ and $\mathcal{;}$ and acts transitively on the arcs of each of these two sets. Let $\tilde{\Gamma}$ be the graph with vertex-set $\mathcal{M}$ and two vertices $e_1 , e_2 \in \mathcal{M}$ adjacent if and
only if they are (as edges of $\Gamma$) at distance $1$ in $\Gamma$. The graph $\tilde{\Gamma}$ is then called the \textbf{merge} of $\Gamma$. We may also
think of $\Gamma$ as being obtained by contracting all the edges in $\mathcal{M}$. The group $G$ acts as an arc-transitive
group of automorphisms on $\tilde{\Gamma}$. Moreover, the connected components of the the 2-factor $\mathcal{F}$ gives rise to a
decomposition $\mathcal{C}$ of $E\tilde{\Gamma}$ into cycles.

Since we are assuming that $\Gamma$ is neither a prism nor a M\"{o}bius ladder,~\cite[Lemma~9 and Theorem~10]{PSV.cubicCensus} implies that $\tilde{\Gamma}$ is $4$-valent. Moreover, the action of $G$ on $\tilde{\Gamma}$ is faithful, arc-transitive but not $2$-arc-transitive. Observe also that $\mathrm{fpr}(E\tilde{\Gamma})=\mathrm{fpr}(E\Gamma,g)>1/3$. By Theorem~\ref{thrm:main1}, it follows that $\tilde{\Gamma}\cong C(r,s)$, with $3s<2r-3$ or $\tilde{\Gamma}\cong K_5$. The latter case yields $|V\Gamma|=10$ and the veracity of Theorem~\ref{thrm:main2} follows with an inspection on the finite connected $3$-valent graphs having $10$ vertices. Therefore, suppose $\tilde{\Gamma}\cong C(r,s)$.

In view of~\cite[Theorem~12]{PSV.cubicCensus}, the graph $\Gamma$ can be uniquely reconstructed from $\tilde{\Gamma}$ and the decomposition $\mathcal{C}$ of $E\tilde{\Gamma}$ arising from the $2$-factor $\mathcal{F}$ via the splitting operation defined in Section~\ref{secSplitPrXu}. Therefore, $\Gamma\cong S(C(r,s))$.
\subsection{Proof of Theorem~$\ref{thrm:main2}$ when the local group is transitive}\label{secBuddy3}
Let $\Gamma$ be a finite connected $3$-valent vertex-transitive graph, let $v\in V\Gamma$, let $G:=\mathrm{Aut}(\Gamma)$  and let $g\in G\setminus\{1\}$ with $\mathrm{fpr}(E\Gamma,g)>1/3$. Assume that $G_v^{\Gamma(v)}$ is transitive. In particular, $G$ is $2$-arc-transitive. Let $s\ge 1$ such that $G$ is $s$-arc-transitive and $G$ is not $(s+1)$-arc-transitive. Tutte's theorem~\cite{TutteOnSymCubic} implies  that $G$ is $s$-arc-regular.

Similarly to Definition~\ref{def:1}, we partition $E\Gamma$ with respect to the action of $g$.
\begin{itemize}
\item We let $A(\Gamma,g)$ be the set of edges which are pointwise fixed by $g$, that is, $\{a,b\}\in A(\Gamma,g)$ if and only if $\{a,b\}\in E\Gamma$, $a^g=a$ and  $b^g=b$;
\item we let $F(\Gamma,g):=\mathrm{Fix}(E\Gamma,g)\setminus A(\Gamma,g)$, that is, $\{a,b\}\in F(\Gamma,g)$ if and only if $\{a,b\}\in E\Gamma$, $a^g=b$ and  $b^g=a$;
\item we let $N(\Gamma,g):=E\Gamma\setminus \mathrm{Fix}(E\Gamma,g)$.
\end{itemize}
We let $\Gamma[g]$ denote the subgraph of  $\Gamma$ induced by $\Gamma$ on the vertices which are incident with  edges in $A(\Gamma,g)$. The  edge-set of $\Gamma[g]$ is $A(\Gamma,g)$ and its vertices are  $1$- or $3$-valent. Given $i\in \{1,3\}$, we let $V_i(\Gamma,g)$ denote the set of vertices of $\Gamma[g]$ having valency $i$. 

Suppose $\Gamma[g]$ is not a forest. Then $\Gamma[g]$ contains an $\ell$-cycle $C$. From Lemma~\ref{arcTransGirth}, we have $g(\Gamma[g]) \ge g(\Gamma) \ge s + 1$ and hence, from
$C$, we can extract an $s$-arc $(v_0,v_1,\ldots,v_{s-1})$. As $g$ fixes this $s$-arc and as $G$ is $s$-arc-regular, we deduce $g=1$, which is a contradiction. Therefore $\Gamma[g]$ is a forest. Before proceeding with the proof of Theorem~\ref{thrm:main2}, we prove a preliminary lemma.

\begin{lem}\label{cubicNiceV}
We have $2|F(\Gamma,g)|+3|V_1(\Gamma,g)|+|V_3(\Gamma,g)|\leq |V\Gamma|$.
\end{lem}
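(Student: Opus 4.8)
The plan is to mirror the proof of Lemma~\ref{tetraNiceV}, partitioning $V\Gamma$ according to the action of $g$ and reducing the asserted inequality to a clean counting statement. I classify a fixed vertex of $g$ as being of type $V_3$, $V_1$ or $V_0$ according to whether $g$ induces on its neighbourhood the identity, a transposition, or a $3$-cycle; the first two are exactly the sets $V_3(\Gamma,g)$ and $V_1(\Gamma,g)$ of Definition-style notation already in use, while I let $V_0(\Gamma,g)$ collect the fixed vertices lying on no edge of $\Gamma[g]$. Among the moved vertices, let $\mathcal{F}$ be those incident to an edge of $F(\Gamma,g)$ and let $\mathcal{N}$ be the remaining ones. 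Since each vertex has a unique $g$-image, every edge of $F(\Gamma,g)$ contributes two distinct endpoints, so $|\mathcal{F}|=2|F(\Gamma,g)|$; moreover these five sets partition $V\Gamma$. Substituting $|V\Gamma|=|V_3(\Gamma,g)|+|V_1(\Gamma,g)|+|V_0(\Gamma,g)|+2|F(\Gamma,g)|+|\mathcal{N}|$ into the claim, the terms $|V_3(\Gamma,g)|$ and $2|F(\Gamma,g)|$ cancel and the inequality becomes equivalent to $2|V_1(\Gamma,g)|\le |V_0(\Gamma,g)|+|\mathcal{N}|$, which I would in fact establish in the sharper form $2|V_1(\Gamma,g)|\le |\mathcal{N}|$.

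To prove the latter I would first record the local picture at a vertex $v\in V_1(\Gamma,g)$: it has one fixed neighbour and two further neighbours $w_1,w_2$ interchanged by $g$. The edges $\{v,w_1\}$ and $\{v,w_2\}$ are moved, so $w_1,w_2$ are moved vertices, and since the only possible $F$-partner of $w_i$ is $w_{3-i}$, the pair $\{w_1,w_2\}$ lies in $\mathcal{N}$ unless $\{w_1,w_2\}\in E\Gamma$, in which case $v,w_1,w_2$ would form a triangle. Next I would build the bipartite graph on $V_1(\Gamma,g)\cup\mathcal{N}$ joining each $v$ to its two moved neighbours, and show that the $2$-sets $\{w_1,w_2\}$ attached to distinct vertices $v,v'\in V_1(\Gamma,g)$ are pairwise disjoint: a common moved neighbour $w$ of $v$ and $v'$ would force both fixed vertices to be adjacent also to $w^g\ne w$, yielding the $4$-cycle $v\,w\,v'\,w^g$. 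These two observations exhibit pairwise disjoint $2$-subsets of $\mathcal{N}$ indexed by $V_1(\Gamma,g)$, whence $2|V_1(\Gamma,g)|\le|\mathcal{N}|$ and the lemma follows.

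The delicate point is precisely the exclusion of these short cycles, and this is where the bare combinatorics must be fed the hypotheses of the section. Both steps above break when $g(\Gamma)\le 4$; indeed the target inequality genuinely fails for suitable automorphisms of $K_4$ or of the $3$-cube acting locally as a transposition, so the counting argument as written needs $g(\Gamma)\ge 5$. Since $G$ is $s$-arc-regular, Lemma~\ref{arcTransGirth} gives $g(\Gamma)\ge s+1$, which already suffices when $s\ge 4$; in the remaining possibilities the girth is at most $4$, and a finite (classical) list says that a connected $3$-valent arc-transitive graph of girth at most $4$ is one of $K_4$, $K_{3,3}$ and $Q_3$. I would dispose of these directly — most efficiently by checking that none of them admits a non-identity automorphism with $\fpr(E\Gamma,g)>1/3$, so that the only situation in which the lemma is really invoked satisfies $g(\Gamma)\ge 5$ and the bipartite-counting argument applies verbatim. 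Thus the main obstacle is not the counting itself but the girth control: guaranteeing that the $g$-orbit of a shared neighbour, or of the two swapped neighbours of a single $V_1$-vertex, actually produces a forbidden cycle.
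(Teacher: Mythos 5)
Your proposal is correct and follows essentially the same route as the paper: the same reduction of the inequality to $2|V_1(\Gamma,g)|\le|\mathcal{N}|$, the same auxiliary bipartite counting under $g(\Gamma)\ge 5$ (your explicit triangle and $4$-cycle $v\,w\,v'\,w^g$ exclusions are exactly what the paper compresses into ``$g(\Delta)\ge 5$''), and the same dispatch of small girth by listing the exceptional graphs and checking that none admits a non-identity automorphism fixing more than $1/3$ of the edges. The only cosmetic difference is bookkeeping: the paper first kills the case $s=1$ by arc-regularity (so that $V_1(\Gamma,g)=V_3(\Gamma,g)=\emptyset$) and then invokes the girth-$4$ classification for $s\ge 2$ from the cited reference, whereas you invoke the classical list of $3$-valent arc-transitive graphs of girth at most $4$ directly, noting that $K_{4,4}-4K_2$ is just your $Q_3$.
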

\begin{proof}
When $s=1$, the arc-regularity of $G$ implies $V_1(\Gamma,g)=V_3(\Gamma,g)=\emptyset$ and the proof immediately follows. Hence for the rest of the proof we may suppose $s\ge 2$.

We let 
\begin{align*}
\mathcal{F}&:=\{v\in V\Gamma\mid \{v,u\}\in F(\Gamma,g) \textrm{ for some }u\in V\Gamma\},\\
\mathcal{N}&:=\{v\in V\Gamma\mid \{v,u\}\in N(\Gamma,g)\textrm{ for some }u\in V\Gamma\}.
\end{align*}
Since $V_1(\Gamma,g),V_3(\Gamma,g),\mathcal{F},\mathcal{N}$ are pairwise disjoint and since $|\mathcal{F}|=2|F(\Gamma,g)|$, it suffices to show that
$|\mathcal{N}|\ge 2|V_1(\Gamma,g)|$. We divide this proof according to the girth of $\Gamma$.

Suppose $g(\Gamma)\ge 5$. Here, we construct an auxiliary graph $\Delta$. The vertex set of $\Delta$ is $V_1(\Gamma,g)\cup\mathcal{N}$ and we declare a vertex $v\in V_1(\Gamma,g)$ adjacent to a vertex $u\in\mathcal{N}$ if $\{v,u\}\in E\Gamma$. By construction, $\Delta$ is bipartite with parts $V_1(\Gamma,g)$ and $\mathcal{N}$.
Given $v\in V_1(\Gamma,g)$, the automorphism $g$ acts as a $2$-cycle on $\Gamma(v)$. Let $v_1,v_2\in \Gamma(v)$ forming the $2$-cycle of $g$. Then $\{v,v_1\},\{v,v_2\}\in N(\Gamma,g)$ and hence $v_1,v_2\in\mathcal{N}$. This shows that 
each vertex in $V_1(\Gamma,g)$ has two neighbours in $\mathcal{N}$. As $g(\Gamma)\ge 5$, we have $g(\Delta)\ge 5$ and hence $2|V_1(\Gamma,g)|\le |\mathcal{N}|$, because $\Delta(v)\cap \Delta(v')=\emptyset$ for any two distinct vertices $v,v'\in V_1(\Gamma,g)$.

Suppose $g(\Gamma)= 3$. Let $\Gamma(v) = \{w_1 , w_2 , w_3\}$. Without loss of generality, suppose $w_1$ and $w_2$ are adjacent. Since
$G$ is arc-transitive,  $w_i$ is adjacent to $w_j$ for any $i \ne j$. Thus $\Gamma\cong  K_4$. The graph $K_4$ admits no non-identity automorphisms with $\mathrm{fpr}(E\Gamma,g)>1/3$.

Suppose $g(\Gamma)= 4$.	Since $s\ge 2$,~\cite[Theorem~$1.1$ and Table~I]{PeMaKu} implies that $\Gamma$ is isomorphic to either $K_{3,3}$ or $K_{4,4}-4K_2$. In both cases, $\Gamma$ does not admit a non-identity automorphism $g$ with $\mathrm{fpr}(E\Gamma,g)>1/3$. 
\end{proof}

We now resume our proof of Theorem~\ref{thrm:main2}. As $2|E\Gamma|=3|V\Gamma|$, from Lemma~\ref{cubicNiceV}, we have
\begin{align}\label{eq:last}
\fpr(E\Gamma,g)& = \frac{|F(\Gamma,g)|+|A(\Gamma,g)|}{|E\Gamma|} = \frac{2|F(\Gamma,g)|+2|A(\Gamma,g)|}{3|V\Gamma|}\\
& \leq  \frac{2|F(\Gamma,g)| +|V_1(\Gamma,g)| +3|V_3(\Gamma,g)|}{6|F(\Gamma,g)|+ 9|V_1(\Gamma,g)|+ 3|V_3(\Gamma,g)|}.\nonumber
	\end{align}
Let $c$ be the number of connected components of $\Gamma[g]$. From Euler's formula, we have $|V\Gamma[g]|-|E\Gamma[g]|=c$.  Let $\mathcal{S}:=\{(v,w)\in V\Gamma[g]\times V\Gamma[g]\mid \{v,w\}\in E\Gamma[g]\}$. Then
$$2|E\Gamma[g]|=|\mathcal{S}|=\sum_{v\in V\Gamma[g]}|\Gamma[g](v)|=
\sum_{v\in V_1(\Gamma,g)}|\Gamma[g](v)|+\sum_{v\in V_3(\Gamma,g)}|\Gamma[g](v)|
=
|V_1(\Gamma,g)|+3|V_3(\Gamma,g)|.$$
It follows $2|V_3(\Gamma,g)|=|V_1(\Gamma,g)|-2c<|V_1(\Gamma,g)|$. Using~\eqref{eq:last} and this inequality, we obtain $\mathrm{fpr}(E\Gamma,g)\le 1/3$, which is our final contradiction.

\bibliographystyle{alpha}
\bibliography{OnFixedEdgesarXiv}
	
\end{document}